\documentclass[letter,10pt,leqno, english]{amsart}
        \title[A decomposition of equivariant K-theory]{A decomposition of equivariant K-theory in twisted equivariant K-theories}
         \author{Jos\'e Manuel G\'omez}
          \address{ Escuela de Matem\'aticas\\
          Universidad Nacional de Colombia sede Medell\'in\\
	Calle 59A No 63-20, Bloque 43, Oficina 243\\ Medell\'in, Colombia }      
       \email{jmgomez0@unal.edu.co}
    \urladdr{https://sites.google.com/a/unal.edu.co/jmgomez0/}
      \author{Bernardo Uribe}
      \address{Departamento de Matem\'aticas y Estad\'istica\\
       Universidad del Norte\\ 
       Km. 5 via Puerto Colombia, Barranquilla, Colombia}
      \email{bjongbloed@uninorte.edu.co}
      \urladdr{https://sites.google.com/site/bernardouribejongbloed/}
            \keywords{Equivariant K-theory, twsited K-theory, twisted equivariant K-theory}       
       \subjclass[2010]{19L47, 19L50}
\thanks{ Both authors acknowledge and thank the financial support provided by the Max Planck Institute for Mathematics and by COLCIENCIAS through grant number FP44842-617-2014 of the Fondo Nacional de Financiamiento para la Ciencia, la Tecnolog\'ia y la Innovaci\'on. The second author acknowledges and thanks the financial support provided by the 
Alexander Von Humboldt Foundation.}

\usepackage{pdfsync}
\usepackage{hyperref}
\usepackage{calc}
\usepackage{enumerate,amssymb,amscd}
\usepackage{color}
\usepackage[arrow,curve,matrix,tips,2cell]{xy}
  \SelectTips{eu}{10} \UseTips
  \UseAllTwocells
\usepackage{tikz}
\usepackage{pdfcolmk}

\DeclareMathAlphabet{\matheurm}{U}{eur}{m}{n}

\DeclareMathOperator{\Aut}{Aut}

\DeclareMathOperator{\Inn}{Inn}

\DeclareMathOperator{\Hom}{\textup{Hom}}
\DeclareMathOperator{\Irr}{\textup{Irr}}

  \newcommand{\IC}{\mathbb{C}}

  \newcommand{\IF}{\mathbb{F}}

  \newcommand{\IS}{\mathbb{S}}

  \newcommand{\IV}{\mathbb{V}}

  \newcommand{\IZ}{\mathbb{Z}}

  \newcommand{\cala}{\mathcal{A}}

  \newcommand{\calh}{\mathcal{H}}
  \newcommand{\cali}{\mathcal{I}}

  \newcommand{\calu}{\mathcal{U}}



\newcounter{commentcounter}




\theoremstyle{plain}
\newtheorem{theorem}{Theorem}[section]

\newtheorem{lemma}[theorem]{Lemma}

\newtheorem{corollary}[theorem]{Corollary}
\newtheorem{proposition}[theorem]{Proposition}

\newtheorem*{theorem*}{Theorem}
\newtheorem*{mtheorem*}{Main Theorem}

\theoremstyle{definition}
\newtheorem{definition}[theorem]{Definition}

\newtheorem{remark}[theorem]{Remark}

\theoremstyle{remark}
\newtheorem*{summary*}{Summary}

\makeatletter\let\c@equation=\c@theorem\makeatother

\hyphenation{equi-variant}





\newcommand{\version}[1] 
{\begin{center} Last edited on #1\\
    Last compiled on \today\\
file name: \jobname
  \end{center}
}

\begin{document}

\begin{abstract}  
For $G$ a finite group and $X$ a $G$-space on which a normal subgroup $A$ acts trivially,
we show that the $G$-equivariant K-theory of $X$ decomposes as a direct 
sum of twisted equivariant K-theories of $X$ parametrized by the orbits of the conjugation action 
of $G$ on the irreducible representations of $A$. 
The twists are group 2-cocycles which encode
the obstruction of lifting an irreducible representation of $A$ to the 
subgroup of $G$ which fixes the isomorphism class of the irreducible representation.
\end{abstract}

\maketitle

\section*{Introduction}

Suppose that we have a group extension of finite groups 
\[
1\to A\stackrel{\iota}\rightarrow G\stackrel{\pi}\rightarrow Q\to  1.
\]
The purpose of this article is to study the $G$-equivariant K-theory $K^{*}_{G}(X)$, on 
compact, Hausdorff $G$-spaces $X$ for which the action of $A$ is trivial.  

Equivariant K-theory is an equivariant generalized cohomology theory that is 
constructed out of $G$-equivariant vector bundles. Its basic properties were derived in 
\cite{SegalK}.  

Whenever $X$ is a $G$-space such that $A$ acts trivially and 
$p:E\to X$ is a $G$-equivariant vector bundle, 
then we can regard $E$ as  an $A$-equivariant vector bundle and thus the fibers of $E$ 
can be seen as $A$-representations. Decomposing $E$ into $A$-isotypical pieces 
(see \cite[Proposition 2.2]{SegalK}), we obtain a decomposition of 
$E$ as an $A$-equivariant vector bundle
\begin{align*}
\bigoplus_{[\tau]\in \Irr(A)}\IV_{\tau}\otimes \Hom_{A}(\IV_{\tau},E)\cong E.
\end{align*}
Here $\IV_{\tau}$ denotes the $A$-vector bundle $\pi_{1}:X\times V_{\tau}\to X$ associated to
 an irreducible representation $\tau:A\to U(V_{\tau})$ and $\Irr(A)$ denotes the set 
of isomorphism classes of complex irreducible 
$A$-representations. 

It is important to point out 
that this decomposition is one of $A$-vector bundles and not one of
$G$-vector bundles since in general the bundles
$\IV_{\tau}\otimes \Hom_{A}(\IV_{\tau},E)$ do not possess the structure of a $G$-vector bundle. 
The key observation of this work is that the sum 
$\bigoplus_{[\tau]\in \Irr(A)}\IV_{\tau}\otimes \Hom_{A}(\IV_{\tau},E)$ can be rearranged 
using the different orbits of the action of $Q$ on $\Irr(A)$ as to  
obtain a decomposition of $E$ in terms of $G$-vector bundles. Moreover, we show that 
the factors obtained in this refined decomposition naturally define vector bundles that 
are used to defined twisted forms of equivariant K-theory and in this way we obtain a 
decomposition of $K^{*}_{G}(X)$ as a direct sum of twisted forms of 
equivariant K-theory. To make this precise suppose that $\tau:A\to U(V_{\tau})$
is an irreducible $A$-representation. Let 
$G_{[\tau]}$ (resp. $Q_{[\tau]}$) denote the isotropy group of the action of $G$ 
(resp. $Q$) at $[\tau]\in \text{Irr}(A)$. These groups fit into a group extension of the form 
\[
1\to A\stackrel{\iota}\rightarrow G_{[\tau]}\stackrel{\pi}\rightarrow Q_{[\tau]}\to  1
\]
thus defining a $2$-cocycle 
$\alpha_\tau \in Z^2(Q_{[\tau]},\IS^{1})$ which is the data needed 
to define the $\alpha_\tau$-twisted $Q_{[\tau]}$-equivariant
K-theory groups ${}^{\alpha_\tau}K^{*}_{Q_{[\tau]}}(X)$ (see  
\cite[Section 7]{Adem-Ruan} for the definition). With this notation we can state the following 
theorem which is the main result of this article.\\

\noindent {\bf{Theorem 3.4.}}
{\it{Suppose that $A\subset G$ is a normal subgroup and $X$ is a compact, Hausdorff $G$-space on 
which $A$ acts trivially. Then there is a natural 
isomorphism 
\[
\Psi_{X}:K_{G}^{*}(X) \stackrel{\cong}{\rightarrow} 
\bigoplus_{[\tau]\in G \backslash \Irr(A)}{}^{\alpha_\tau}K^{*}_{Q_{[\tau]}}(X)
\]
where $[\tau]$ runs over the orbits of $G$ on $\Irr(A)$,   This isomorphism
is functorial on maps $X \to Y$ of $G$-spaces on which $A$ acts trivially. }}\\

We remark that the previous theorem also holds in the case of $G$ being a compact Lie group. 
However we chose to work first with finite groups because in this case we can obtain
explicit formulas for the cocycles used to twist equivariant K-theory. 
The general case will be handled in a sequel to this article. 

The layout of this article is as follows. In Section \ref{section1} we study the problem of extending 
homomorphisms of finite groups. In particular, the cocycles  $\alpha_\tau \in Z^2(Q_{[\tau]},\IS^{1})$ 
that appear in Theorem \ref{decomposition in twisted equivariant K-theories} are constructed in this section. 
In Section \ref{section2} we construct a twisted form of equivariant K-theory using vector bundles 
that come equipped with a prescribed fiberwise 
representation. In Section \ref{section3} we prove Theorem \ref{decomposition in twisted equivariant K-theories} which is the 
main result of this work. In Section \ref{section4} we relate Theorem \ref{decomposition in twisted equivariant K-theories} to
the Atiyah-Segal completion theorem. In Section \ref{section5} we provide a formula 
for the third differential of the Atiyah-Hirzebruch spectral sequence that computes 
$K_{G}^{*}(X)$ whenever $A$ acts trivially on $X$. Finally, in Section \ref{section6} 
some explicit computations are provided for the dihedral group $D_8$.

Throughout this work all the spaces in sight will be compact and Hausdorff endowed with 
a continuous action of the finite group $G$ unless stated otherwise.  

\section{Extensions of homomorphisms of finite groups}\label{section1}

In this section we study extensions of  homomorphisms of finite groups. 
Our main goal is to show that the obstructions for finding such extensions can be studied 
using  group cohomology. We remark that the material in this section may be known to experts 
but we include the main ingredients that will be used throughout this article for 
completeness. We refer 
the reader to \cite[Chapter I]{Adem-Milgram}  and  \cite{Brownbook} for background on group
cohomology.

Consider the group extension of finite groups 
\[
1\to A\stackrel{\iota}\rightarrow G\stackrel{\pi}\rightarrow Q\to  1
\]
and fix a given homomorphism of groups $\rho : A \to U$. We want to 
study the conditions under which the homomorphism $\rho$ may be extended
to a homomorphism $\widetilde{\rho} : G \to U$ in such a way that 
$\widetilde{\rho} \circ \iota= \widetilde{\rho}|_A= \rho$. 

First note that since $A$ is normal in $G$, the group $G$ acts on the left on the set $\Hom(A,U)$
of homomorphisms from $A$ to $U$: for a homomorphism $\chi : A \to U$ 
and $ g \in G$ we define the homomorphism $g \cdot \chi$ by the equation
\[ (g \cdot \chi)(a) := \chi(g^{-1}ag). \]
Second note that the group $U$ acts on the right on $\Hom(A,U)$ by conjugation:
for a homomorphism $\chi : A \to U$ and $ M \in U$ we define the homomorphism 
$ \chi \cdot M$ by the equation 
\[ (\chi \cdot M)(a) := M^{-1}\chi(a)M. \]
Further note that this left $G$ action on  $\Hom(A,U)$ commutes with the right $U$ action.

If $\rho$ were to be extended to $\widetilde{\rho} : G \to U$
then we would have the equality
\[ (g \cdot \rho)(a) = \rho(g^{-1}ag)= \widetilde{\rho}(g)^{-1} \rho(a) \widetilde{\rho}(g) \]
thus implying that the homomorphisms $g \cdot \rho$ and $\rho$ are conjugate to each other by
 an element $\widetilde{\rho}(g)$ in $U$, or in other words
that $g \cdot \rho = \rho \cdot \widetilde{\rho}(g)$.  
In particular this implies that there must exist a homomorphism
$ f: G \to \Inn(U)$ from $G$ to the inner automorphisms of $U$ such that we have the equation
\[ g \cdot \rho = f(g) \circ \rho, \]
and that $\rho \in \left[ \Hom(A,U) / U \right]^G$, i.e. the class of $\rho$ is $G$-invariant 
on the set equivalence classes of homomorphisms up to conjugation. 
 
Therefore the first obstruction for the existence of the extension $\widetilde{\rho}$ of $\rho$ 
is the existence of a homomorphism $f : G \to \Inn(U)$ such that the following diagram commutes
\begin{align} \label{commutative diagram for the extension with rho tilde} 
\xymatrix{& 1 \ar[r] &A \ar[r]^\iota \ar[d]_\rho 
& G \ar[d]^{f} \ar@{.>}[dl]_{\widetilde{\rho}} \ar[r]^{\pi}  & Q \ar[r]&  1\\ 
1 \ar[r] & Z(U) \ar@{^{(}->}[r] &U \ar[r]^{p} & \Inn(U) \ar[r] &1, &} 
\end{align}
where the homomorphism $p$ is the canonical homomorphism, and $Z(U)$ is the center of $U$,
and that the class of $\rho$ up to $U$ conjugation is $G$-invariant, i.e 
$\rho \in \left[ \Hom(A,U) / U \right]^G$.


\subsection{Extension of homomorphisms}

Let us suppose that we are in the situation described in diagram 
\eqref{commutative diagram for the extension with rho tilde}
with $\rho \in \left[ \Hom(A,U) / U \right]^G$. In what follows we will show that 
the obstruction for the existence of the extension 
$\widetilde{\rho} : G \to U$ lies in $H^2(Q, Z(U))$. 

Take a set theoretical section $\sigma : Q \to G$ of the homomorphism $\pi:G\to Q$ satisfying
$\pi (\sigma(q))=q$ for all $q \in Q$. Choose $\sigma$ such that $\sigma(1)=1$.
Define $\chi:Q\times Q\to A$ by the equation
\[
\chi(q_{1},q_{2})=\sigma(q_{1}q_{2})^{-1}\sigma(q_{1})\sigma(q_{2})
\]
and notice that $\chi(q_{1},q_{2})$ belongs to $A$ since $\pi(\chi(q_{1},q_{2}))=1$  and
moreover that $\chi$ is normalized in the sense that $\chi(q_{1},q_{2})=1$ whenever
$q_{1}=1$ or $q_{2}=1$.
Define also the set theoretical map $\psi:Q \to \Aut(A)$ by the equation
\[
\psi(q)(a)=\sigma(q)^{-1}a\sigma(q).
\]
The pair $(\chi,\psi)$ defines a nonabelian group $2$-cocycle with automorphisms
and it satisfies the cocycle equation
\begin{equation}\label{cocycle1*}
\chi(q_{1}q_{2},q_{3})\psi(q_3)(\chi(q_{1},q_{2}))
=\chi(q_{1},q_{2}q_{3})\chi(q_{2},q_{3}).
\end{equation}
Using the $2$-cocycle $(\chi,\psi)$  we can  endow the set $Q\times A$ with a group structure  
as follows
 \[
(q_{1},a_{1})(q_{2},a_{2})=(q_{1}q_{2},\chi(q_{1},q_{2})\psi(q_{2})(a_{1})a_{2}).
\] 
Denote this group by $Q\ltimes_{(\chi,\psi)} A$
and note that the maps
\begin{align*}
G&\to Q\ltimes_{(\chi,\psi)} A  & Q\ltimes_{(\chi,\psi)} A \to & G \\
g&\mapsto (\pi(g),\sigma(\pi(g))^{-1}g) & (q,a) \mapsto & \sigma(q)a
\end{align*}
become isomorphisms of groups one inverse to the other; therefore we may identify
the group $G$ with the group  $Q\ltimes_{(\chi,\psi)} A$.

On the other hand, let us consider the inner automorphisms of $U$ defined by 
the elements $f(\sigma(q))$ and choose elements $M_q \in U$ such that
\[ p(M_q) = f(\sigma(q)) \]
for all $q \in Q$; choose $M_1=1$.   In particular we have that
\begin{align} \label{equation of conjugation of rho and M}
 \rho(\sigma(q)^{-1}a\sigma(q))=M_q^{-1} \rho(a) M_q\end{align}
for all $q \in Q$ and $a \in A$.

For $q_1, q_2$ in $Q$ we notice that 
\[ p(\rho(\chi(q_1,q_2))M_{q_2}^{-1}M_{q_1}^{-1}M_{q_1q_2})=1 \]
and thus $\rho(\chi(q_1,q_2))M_{q_2}^{-1}M_{q_1}^{-1}M_{q_1q_2}$
belongs to the center $Z(U)$. 

\begin{definition}
Consider the commutative diagram of groups of \eqref{commutative diagram for the extension with rho tilde}
with $\rho \in \left[ \Hom(A,U) / U \right]^G$, the set theoretical section 
$\sigma: Q \to G$ and the lifts $M_q \in U$ of the elements $f(\sigma(q))$ with $M_1=1$. 
Define the map $\alpha_\rho: Q \times Q \to Z(U)$
by the equation
\[
\alpha_\rho(q_1,q_2):=\rho(\chi(q_1,q_2))M_{q_2}^{-1}M_{q_1}^{-1}M_{q_1q_2}.
\]
\end{definition}

The following lemma shows that associated to $\alpha_\rho$ we have a 
cohomology class that does not depend on the choices made above. The proof 
follows by direct computations and is left as an exercise to the reader.

\begin{lemma}
The map $\alpha_\rho : Q \times Q \to Z(U)$ satisfies the 2-cocycle condition, i.e. 
for every  $q_{1},q_{2},q_{3}\in Q$ the equality 
\[
\alpha_{\rho}(q_{1},q_{2}q_{3})\alpha_{\rho}(q_{2},q_{3})
=\alpha_{\rho}(q_{1}q_{2},q_{3})\alpha_{\rho}(q_{1},q_{2}) 
\]
is satisfied, thus making $\alpha_{\rho}$ a 2-cocycle
of the group $Q$ with coefficients in the abelian group $Z(U)$ seen as 
a trivial $Q$-module. Moreover,  the cohomology class $[\alpha_\rho] \in H^2(Q,Z(U))$ is well defined. Namely,
it does not depend on the choice of section $\sigma$ nor on the choice of lifts $M_q$'s.
\end{lemma}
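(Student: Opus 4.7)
My plan is to derive the cocycle identity for $\alpha_\rho$ as a direct algebraic consequence of the nonabelian cocycle equation \eqref{cocycle1*} satisfied by $(\chi,\psi)$, and then to establish independence of the cohomology class by showing that each admissible change of auxiliary data modifies $\alpha_\rho$ by an explicit $2$-coboundary.

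For the cocycle identity, the key input is the conjugation relation \eqref{equation of conjugation of rho and M}, which under $\rho$ transforms into $\rho(\psi(q)(a))=M_q^{-1}\rho(a)M_q$. Applying $\rho$ to both sides of \eqref{cocycle1*} and using this relation yields
\[
\rho(\chi(q_1q_2,q_3))\,M_{q_3}^{-1}\rho(\chi(q_1,q_2))M_{q_3}=\rho(\chi(q_1,q_2q_3))\,\rho(\chi(q_2,q_3)).
\]
Next I would substitute $\rho(\chi(q_i,q_j))=\alpha_\rho(q_i,q_j)\,M_{q_iq_j}^{-1}M_{q_i}M_{q_j}$ into this equation; because each $\alpha_\rho(q_i,q_j)$ lies in $Z(U)$, all the central factors commute through the $M$'s, and the $M$-terms on either side collapse identically. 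What remains is precisely the required identity $\alpha_\rho(q_1,q_2q_3)\alpha_\rho(q_2,q_3)=\alpha_\rho(q_1q_2,q_3)\alpha_\rho(q_1,q_2)$.

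For independence of the lifts $M_q$, fix the section $\sigma$ and replace $M_q$ by $M_q'=M_q z_q$ with $z_q\in Z(U)$ and $z_1=1$. Centrality of the $z_q$ lets them pass freely through the $M$'s, so a short computation gives $\alpha_\rho'(q_1,q_2)=\alpha_\rho(q_1,q_2)\,z_{q_2}^{-1}z_{q_1}^{-1}z_{q_1q_2}$, which is exactly $\alpha_\rho$ multiplied by the coboundary of $(q\mapsto z_q^{-1})$. For independence of the section, I would write a new section as $\sigma'(q)=\sigma(q)a_q$ with $a_q\in A$ and $a_1=1$, and expand the new $\chi'$ in terms of $\chi$ and the $a_q$'s using the identity $\psi(q_2)(a_{q_1})=\sigma(q_2)^{-1}a_{q_1}\sigma(q_2)$. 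Because $f\circ\iota=p\circ\rho$ (from diagram \eqref{commutative diagram for the extension with rho tilde}), the natural compatible choice of lifts is $M_q'=M_q\rho(a_q)$, and a cancellation using once more the conjugation relation $\rho(\psi(q_2)(a_{q_1}))=M_{q_2}^{-1}\rho(a_{q_1})M_{q_2}$ together with the centrality of $\alpha_\rho$ shows that $\alpha_\rho'=\alpha_\rho$ on the nose. Combining this with the previous paragraph, any two choices of section and lifts produce cocycles in the same cohomology class.

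The only real obstacle is the bookkeeping when changing the section, since that change simultaneously affects $\chi$ and the canonical choice of $M_q$; the strategy above circumvents it by making the change of lifts along with the change of section, reducing to the previously handled case of a pure change of lifts.
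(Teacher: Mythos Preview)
Your argument is correct. The paper itself does not supply a proof of this lemma: it simply states that ``the proof follows by direct computations and is left as an exercise to the reader.'' Your proposal is precisely the direct computation the authors have in mind---deriving the $2$-cocycle identity from \eqref{cocycle1*} via the conjugation relation \eqref{equation of conjugation of rho and M}, and checking that a change of lifts alters $\alpha_\rho$ by a coboundary while a change of section (with the compatible change of lifts $M_q'=M_q\rho(a_q)$) leaves $\alpha_\rho$ unchanged. The bookkeeping you outline is accurate; in particular the cancellation on both sides to the common factor $M_{q_1q_2q_3}^{-1}M_{q_1}M_{q_2}M_{q_3}$ goes through exactly as you describe.
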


\begin{proposition} \label{theorem existence of extension of rho}
The obstruction for the existence of the extension $\widetilde{\rho}: G \to U$  of $\rho: A \to U$ 
fitting into diagram  \eqref{commutative diagram for the extension with rho tilde} and 
satisfying $\rho \in \left[ \Hom(A,U) / U \right]^G$ is the cohomology class 
$[\alpha_\rho] \in H^2(Q,Z(U))$. Namely, $\widetilde{\rho}$ exists
if and only if $[\alpha_\rho]=1$.
\end{proposition}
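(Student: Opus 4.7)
My plan is to prove both implications separately, exploiting the freedom in the choice of lifts $M_q$ that the preceding lemma guarantees.

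For necessity, I would suppose $\widetilde\rho : G \to U$ extends $\rho$ and makes diagram \eqref{commutative diagram for the extension with rho tilde} commute. Since $[\alpha_\rho]$ does not depend on the choice of lifts, I am free to take the canonical choice $M_q := \widetilde\rho(\sigma(q))$, which satisfies $M_1=1$ and $p(M_q)=f(\sigma(q))$ because $\widetilde\rho$ intertwines the two extensions. The defining relation $\sigma(q_1)\sigma(q_2) = \sigma(q_1q_2)\chi(q_1,q_2)$ together with $\widetilde\rho|_A = \rho$ then forces $M_{q_1}M_{q_2} = M_{q_1q_2}\rho(\chi(q_1,q_2))$, so plugging into the definition of $\alpha_\rho$ gives $\alpha_\rho(q_1,q_2)=1$ identically, whence $[\alpha_\rho]=1$.

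For sufficiency, which is the real content, I would assume $[\alpha_\rho]=1$ and pick a normalized $1$-cochain $\beta:Q \to Z(U)$ with $\beta(1)=1$ and $\alpha_\rho(q_1,q_2) = \beta(q_1)\beta(q_2)\beta(q_1q_2)^{-1}$. Replacing each lift by $M'_q := M_q\beta(q)$ preserves all required properties, since $\beta(q) \in Z(U) = \ker p$: the $M'_q$ are still lifts of $f(\sigma(q))$, and they still satisfy the conjugation identity \eqref{equation of conjugation of rho and M}. A direct substitution using centrality of the $\beta(q)$ shows that the cocycle computed with the new lifts is identically trivial:
\[
\rho(\chi(q_1,q_2)) = (M'_{q_1q_2})^{-1}M'_{q_1}M'_{q_2}.
\]
Using the identification $G \cong Q \ltimes_{(\chi,\psi)}A$ recalled in the excerpt, I would then define $\widetilde\rho(\sigma(q)\,a) := M'_q\,\rho(a)$. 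Taking $q=1$ yields $\widetilde\rho|_A = \rho$, and $p(M'_q)=f(\sigma(q))$ makes diagram \eqref{commutative diagram for the extension with rho tilde} commute. Multiplicativity is then verified by writing $g_1g_2$ in standard form $\sigma(q_1q_2)\chi(q_1,q_2)\psi(q_2)(a_1)a_2$, moving $\rho(a_1)$ past $M'_{q_2}$ via \eqref{equation of conjugation of rho and M}, and invoking the displayed identity above.

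The main obstacle is the bookkeeping in the sufficiency direction: one must simultaneously juggle the modified lifts $M'_q$, the nonabelian $2$-cocycle $(\chi,\psi)$ encoding $G$, and the conjugation identity for $\rho$. Conceptually, however, the map $\widetilde\rho$ is forced by its prescribed values on $A$ and on the section $\sigma(Q)$, and the vanishing of $[\alpha_\rho]$ is precisely what is needed for this prescription to extend consistently to a group homomorphism on all of $G$.
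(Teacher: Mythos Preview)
Your proof is correct and follows essentially the same route as the paper's: both directions match almost verbatim, with your $\beta$ and $M'_q$ playing the roles of the paper's $\varepsilon$ and $\widetilde{M}_q$, and your formula $\widetilde\rho(\sigma(q)a)=M'_q\rho(a)$ coinciding with the paper's $\Psi(q,a)=\widetilde{M}_q\rho(a)$ transported along $G\cong Q\ltimes_{(\chi,\psi)}A$. The only difference is that you spell out the multiplicativity check in slightly more detail than the paper, which leaves it as a straightforward verification.
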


\begin{proof}
Let us suppose there is a group homomorphism $\widetilde{\rho}:G \to U$
extending $\rho: A \to U$ and fitting into the diagram \eqref{commutative diagram 
for the extension with rho tilde}. Take any section $\sigma : Q \to G$ with $\sigma(1)=1$ and
choose $M_q := \widetilde{\rho}(\sigma(q))$. Then
\begin{align*}
\alpha_\rho(q_1 ,q_2) &= \rho(\chi(q_1,q_2))M_{q_2}^{-1}M_{q_1}^{-1}M_{q_1q_2} \\
&= \widetilde{\rho}\left(\sigma(q_{1}q_{2})^{-1}\sigma(q_{1})\sigma(q_{2})\right) 
\widetilde{\rho}(\sigma(q_2))^{-1}\widetilde{\rho}(\sigma(q_1))^{-1}\widetilde{\rho}(\sigma(q_1q_2))=1
\end{align*}
and therefore $[\alpha_\rho]=1$.

Let us now suppose that there exist $\varepsilon: Q \to Z(U)$ such that 
$\delta \varepsilon = \alpha_\rho$; this implies that for $q_1,q_2\in Q$ we obtain
\begin{align*}
\varepsilon(q_2) \varepsilon(q_1q_2)^{-1} \varepsilon(q_1)= 
\rho(\chi(q_1,q_2))M_{q_2}^{-1}M_{q_1}^{-1}M_{q_1q_2}.
\end{align*}
Since the image of $\varepsilon$ lies in the center of $U$, we may define
$\widetilde{M}_q:= M_q \varepsilon(q)$ 
thus obtaining the equation
$\widetilde{M}_{q_1}\widetilde{M}_{q_2}=\widetilde{M}_{q_1q_2} \rho(\chi(q_1,q_2))$.
Consider the map
\begin{align*}
\Psi: Q\ltimes_{(\chi,\psi)} A \to U, \ \ \ (q,a) \mapsto \widetilde{M}_q \rho(a).
\end{align*}
It is straight forward to verify that $\Psi$ is a group homomorphism.
Composing the map $\Psi$ with the isomorphism $G \to Q\ltimes_{(\chi,\psi)} A$, $g \mapsto
 (\pi(g),\sigma(\pi(g)))^{-1}g$ we may define the homomorphism
 \begin{align*}
 \widetilde{\rho}:G \to U, \ \ \ g \mapsto \widetilde{M}_{\pi(g)}\rho( \sigma(\pi(g))^{-1}g).
 \end{align*}
Since $\widetilde{\rho}(a)= \rho( \sigma(\pi(a))^{-1}a)= \rho(a)$ for 
$a \in A$ we obtain the desired homomorphism $\widetilde{\rho}:G \to U$ fitting into diagram
\eqref{commutative diagram for the extension with rho tilde}.
\end{proof}

\begin{remark}
Note that in the case that $U$ is abelian we may find the obstruction
for the existence of the extension $\widetilde{\rho}: G \to U$ from the 
Lyndon-Hochschild-Serre spectral sequence associated to the
group extension  $1 \to A \to G \to Q \to 1$. The LHS spectral
sequence converges to $H^*(G,U)$ and its second page is
$E_2^{p,q} \cong H^p(Q,H^q(A,U))$.
Since $E_1^{0,1} \cong H^1(A,U) = \Hom(A,U)$, a homomorphism
$\rho \in H^1(A,U) $ extends to one in $H^1(G,U) = \Hom(G,U)$
if $\rho$ survives all pages of the spectral sequence. The first obstruction
for the extension is $d_1 \rho$, and we have that $d_1 \rho=0$
if and only if $\rho $ is $G$-invariant, i.e. $\rho \in \Hom(A,U)^G$.
If $\rho$ is $G$-invariant, the second and last obstruction for the extension
is $d_2 \rho \in E_2^{2,0} \cong H^2(Q,U)$, which is precisely the cohomology class
$[\alpha_\rho]$ of Proposition \ref{theorem existence of extension of rho}. 
In this case the extension exists if and only if $\rho$ is $G$ invariant and 
$[\alpha_\rho]=0$.
\end{remark}

\subsection{Extensions of irreducible representations}

Consider now the case of
an irreducible representation $\rho : A \to U(V_\rho)$ where $V_\rho$ is 
a complex representation of $A$ and $U(V_\rho)$ denotes the group
of unitary transformations of $V_\rho$. To start we have the following 
lemma whose proof is left to the reader.

\begin{lemma} \label{lemma from G to Inn(U)}
Suppose that for all $g \in G$ the irreducible representation $g \cdot \rho$
is isomorphic to $\rho$, or what is the same that $\rho \in \left[ \Hom(A,U) / U \right]^G$. 
Then there exist a unique homomorphism
$f: G \to \Inn(U(V_\rho))$  making the following diagram commutative
$$\xymatrix{ A \ar[d]_\rho \ar[r]^{\iota} & G \ar[d]^f \\
U(V_\rho) \ar[r]^p & \Inn(U(V_\rho)).
 }$$ 
\end{lemma}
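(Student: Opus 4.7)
The plan is to construct $f$ by extracting intertwiners between $\rho$ and its $G$-translates, and then invoke Schur's lemma to pass to a well-defined map into $\Inn(U(V_\rho))$. The hypothesis that $\rho \in [\Hom(A,U(V_\rho))/U(V_\rho)]^G$ guarantees that for each $g \in G$ there exists $M_g \in U(V_\rho)$ satisfying
\[
M_g^{-1}\rho(a)M_g = \rho(g^{-1}ag) \quad \text{for all } a \in A,
\]
so that $M_g$ intertwines $\rho$ with $g\cdot\rho$. Because $\rho$ is irreducible, Schur's lemma forces any two such intertwiners to differ by a scalar in $Z(U(V_\rho))=\ker(p)$, so the class $p(M_g) \in \Inn(U(V_\rho))$ depends only on $g$. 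I would then define $f(g) := p(M_g)$.

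The next step is to check that $f$ is a group homomorphism and that the diagram commutes. A direct computation shows that $M_{g_1}M_{g_2}$ itself intertwines $\rho$ with $(g_1g_2)\cdot \rho$, so Schur's lemma yields $p(M_{g_1}M_{g_2}) = p(M_{g_1g_2})$, which is the identity $f(g_1)f(g_2) = f(g_1 g_2)$. For the commutativity of the diagram, I would note that for $a\in A$ the element $\rho(a)$ itself is an intertwiner (since $\rho(a)^{-1}\rho(a')\rho(a) = \rho(a^{-1}a'a)$), so one may take $M_{\iota(a)} = \rho(a)$ and conclude $f(\iota(a)) = p(\rho(a))$.

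For uniqueness, suppose $f'$ is another homomorphism making the diagram commute, and write $f'(g) = p(N_g)$. The homomorphism property combined with $f'|_{A} = p\circ\rho$ forces $N_g\rho(a)N_g^{-1}$ to agree with $\rho(gag^{-1})$ modulo the center of $U(V_\rho)$, so the element $T_g := M_g^{-1}N_g$ satisfies $T_g\rho(a)T_g^{-1}$ equal to $\rho(a)$ up to a scalar. Using irreducibility of $\rho$ and Schur's lemma, $T_g$ must be central, whence $p(N_g) = p(M_g)$ and $f' = f$.

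The main obstacle in this argument is the uniqueness step: once one only has the compatibility $N_g\rho(a)N_g^{-1} = \rho(gag^{-1})$ up to scalars, one must carefully exploit the irreducibility of $\rho$ together with the homomorphism condition to force the twisting scalar to be trivial. The existence and homomorphism verification, in contrast, amount to bookkeeping around Schur's lemma.
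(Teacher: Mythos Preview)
The paper leaves this proof to the reader, so there is no argument to compare against; your existence proof is exactly the intended one and is correct: the choice of $M_g$ is well defined modulo scalars by Schur, the product $M_{g_1}M_{g_2}$ is again an intertwiner for $(g_1g_2)\cdot\rho$, and $\rho(a)$ itself serves as $M_{\iota(a)}$.

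The gap is in the uniqueness argument, and it is exactly the obstacle you flag but do not overcome. From $f'\circ\iota = p\circ\rho$ and the homomorphism property you only obtain $T_g\rho(a)T_g^{-1} = c_a\,\rho(a)$ for some character $c:A\to\IS^1$; Schur's lemma then tells you that $T_g$ intertwines $\rho$ with $c\otimes\rho$, not that $T_g$ is central. In general this cannot be repaired: take $A=Q_8$, $G=Q_8\times\IZ/2$, and $\rho$ the $2$-dimensional irreducible representation. Then $p(\rho(Q_8))\subset PU(2)\cong SO(3)$ is the Klein four-group of half-turns, which is its own centralizer, so one may set $f'((1,1))=p(\rho(i))$ and extend to a homomorphism $f':G\to PU(2)$ with $f'|_A=p\circ\rho$ but $f'\neq f$. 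Thus uniqueness fails if the only constraint is commutativity of the square.

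What makes $f$ unique is the stronger condition, implicit in the paper's discussion preceding diagram~\eqref{commutative diagram for the extension with rho tilde}, that $g\cdot\rho = f(g)\circ\rho$ for every $g\in G$. This pins down $f(g)$ as the inner automorphism implementing $\rho\mapsto g\cdot\rho$, and your $M_g$ is by construction a lift of exactly that automorphism; Schur then gives uniqueness immediately. Your argument for uniqueness should invoke this condition rather than only the square.
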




\begin{proposition}\label{obstruction for representations}
Consider the group extension $1 \to A \to G \to Q \to 1$ of finite groups and $\rho:A \to
U(V_\rho)$ an irreducible representation of $A$ such that  $(g\cdot \rho) \cong \rho$ for all $g \in G$.
Then the representation $\rho$ may be extended to an irreducible representation
$\widetilde{\rho} : G \to U(V_\rho)$ if and only if $[\alpha_\rho] $ is trivial in $H^2(Q,\IS^{1})$.
\end{proposition}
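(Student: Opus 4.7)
The plan is to reduce this statement to Proposition \ref{theorem existence of extension of rho} by specializing the target group $U$ to $U(V_\rho)$, and then address the extra irreducibility claim separately.

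First, I would verify that the hypotheses of the earlier proposition are met. The assumption $(g \cdot \rho) \cong \rho$ for all $g \in G$ is exactly the requirement $\rho \in \left[\Hom(A,U(V_\rho))/U(V_\rho)\right]^G$, and Lemma \ref{lemma from G to Inn(U)} supplies the unique homomorphism $f : G \to \Inn(U(V_\rho))$ fitting into the commutative diagram \eqref{commutative diagram for the extension with rho tilde}. Next, since $V_\rho$ is a complex irreducible $A$-representation, Schur's lemma identifies the center $Z(U(V_\rho))$ with the group $\IS^{1}$ of scalar unitary operators on $V_\rho$. Under this identification, the $2$-cocycle $\alpha_\rho$ built from a section $\sigma : Q \to G$ and lifts $M_q \in U(V_\rho)$ of $f(\sigma(q))$ takes values in $\IS^{1}$ and defines a class in $H^2(Q,\IS^{1})$.

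With these identifications in place, Proposition \ref{theorem existence of extension of rho} applies verbatim and produces a group homomorphism $\widetilde{\rho} : G \to U(V_\rho)$ extending $\rho$ if and only if $[\alpha_\rho] = 1$ in $H^2(Q,\IS^{1})$. The only thing left is to upgrade ``homomorphism'' to ``irreducible representation.'' This I would handle by observing that any $\widetilde{\rho}(G)$-invariant subspace $W \subseteq V_\rho$ is, a fortiori, $\widetilde{\rho}(\iota(A)) = \rho(A)$-invariant; irreducibility of $\rho$ then forces $W = 0$ or $W = V_\rho$, so $\widetilde{\rho}$ is automatically irreducible. Conversely, if $\widetilde{\rho}$ exists, then setting $M_q = \widetilde{\rho}(\sigma(q))$ makes $\alpha_\rho$ identically equal to $1$, as in the forward direction of the previous proposition.

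The main ``obstacle'' is conceptual rather than technical: one has to recognize that passing from the abstract target $U$ to the concrete target $U(V_\rho)$ is precisely what turns the abelian coefficient group $Z(U)$ into the circle $\IS^{1}$ that appears in the theory of projective representations, and that irreducibility of the extension is automatic once the restriction to $A$ is irreducible. Beyond that, the argument is a direct specialization of the results already established in this section.
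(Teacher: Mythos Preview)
Your proof is correct and follows the paper's approach exactly: invoke Lemma \ref{lemma from G to Inn(U)} to obtain $f$, identify $Z(U(V_\rho))$ with $\IS^{1}$, and apply Proposition \ref{theorem existence of extension of rho}. One small remark: the identification $Z(U(V_\rho)) \cong \IS^{1}$ is a general fact about the unitary group of any finite-dimensional complex vector space and does not require Schur's lemma or irreducibility of $\rho$; on the other hand, your explicit verification that $\widetilde{\rho}$ is automatically irreducible is a detail the paper leaves implicit.
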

\begin{proof}
By Lemma \ref{lemma from G to Inn(U)} we know that there exists $f : G \to \Inn(U(V_\rho))$
making  diagram \eqref{commutative diagram for the extension with rho tilde} commutative.
By Proposition \ref{theorem existence of extension of rho} we know that
the obstruction for the existence of the extension is $[\alpha_\rho] \in H^2(Q,Z(U(V_\rho)))$,
and since the center of $U(V_\rho)$ is isomorphic to $\IS^1$, the result follows.
\end{proof}

\section{Twisted equivariant K-theory via representations}\label{section2}

In this section we provide a description of a twisted form of equivariant 
K-theory via vector bundles that come equipped with a prescribed fiberwise 
representation of a finite group.
To start suppose that $A$ is a finite group. Assume that $A$ is a normal 
subgroup of a finite group $G$  so that we have a group extension 
\[
1\to A\stackrel{\iota}\rightarrow G\stackrel{\pi}\rightarrow Q\to  1
\]
with $Q=G/A$. Let $G$ act on a compact and Hausdorff space $X$ in such a way that 
for every $x\in X$ we have $A\subset G_{x}$. In other words, the subgroup 
$A$ acts trivially on $X$. 
Let $p:E\to X$ be a $G$-equivariant 
vector bundle.  We can give $E$ a Hermitian metric that is invariant under the action of 
$A$.  If we see $p:E\to X$ as an $A$-vector bundle then 
as the action of $A$ on $X$ is trivial by \cite[Proposition 2.2]{SegalK} 
we have a natural isomorphism of $A$-vector bundles
\begin{align}\label{decom}
\bigoplus_{[\tau]\in \Irr(A)}\IV_{\tau}\otimes \Hom_{A}(\IV_{\tau},E)&\to E\\
v\otimes f&\mapsto f(v). \nonumber
\end{align}
In the above equation $\Irr(A)$ denotes the set of isomorphism classes of complex irreducible 
$A$-representations and if  $\tau:A\to U(V_{\tau})$ is an $A$-representation 
then $\IV_{\tau}$ denotes the $A$-vector bundle $\pi_{1}:X\times V_{\tau}\to X$.
It is important to point out that the decomposition given in (\ref{decom}) is a decomposition 
as $A$-equivariant vector bundles and not as $G$-equivariant vector bundles since in general 
the terms on the left hand side of (\ref{decom})  do not 
have the structure of a $G$-equivariant vector bundle. With this in mind we have the following 
definition.

\begin{definition}\label{definitiontwisted1}
Suppose that $\rho:A\to U(V_{\rho})$ is a complex irreducible representation
and that $1 \to A \to G \to Q \to 1$ is a group extension of finite groups.
A $(G, \rho)$-equivariant vector bundle over $X$ is a $G$-vector bundle 
$p:E\to X$ such that 
the map 
\begin{align*}
\IV_{\rho}\otimes \Hom_{A}(\IV_{\rho},E)&\to E\\
v\otimes f&\mapsto f(v)
\end{align*}
is an isomorphism of $A$-vector bundles.  
\end{definition}

In other words, a $(G,\rho)$-equivariant vector bundle is a $G$-equivariant 
vector bundle $p:E\to X$ that satisfies the following property: for every $x\in X$ 
the $A$-representation $E_{x}$ is isomorphic to a direct sum of the representation 
$\rho$; that is, the only irreducible $A$-representation that appears in the fibers 
of $E$ is $\rho$.  We can define a direct summand of the equivariant K-theory using  
$(G, \rho)$-equivariant vector bundles. For this let 
$\text{Vec}_{G,\rho}(X)$ denote the set of isomorphism classes of 
$(G,\rho)$-equivariant vector bundles, where two 
$(G,\rho)$-equivariant vector bundles are isomorphic if they are isomorphic
as $G$-vector bundles. Notice that if $E_{1}$ and $E_{2}$ are two  
$(G;\rho)$-equivariant vector bundles then so is $E_{1}\oplus E_{2}$. Therefore 
$\text{Vec}_{G, \rho}(X)$  is a semigroup. 

\begin{definition} 
Assume that $G$ acts on a compact space $X$ in such a way 
that $A$ acts trivially on $X$. 
We define $K_{G, \rho}^{0}(X)$, the $(G,\rho)$-equivariant K-theory
of $X$, 
as the Grothendieck construction applied to $\text{Vec}_{G,\rho}(X)$. For 
$n>0$ the group $K_{G,\rho}^{n}(X)$ is defined as 
$\widetilde{K}_{G, \rho}^{0}(\Sigma^{n}X_{+})$, where as usual 
$X_{+}$ denotes the space $X$ with an added base point.
\end{definition}
 
The goal of this section is to provide a description of the 
previous equivariant K-groups in terms of the usual twisted equivariant 
K-groups as defined for example in \cite[Section 7]{Adem-Ruan}.

Suppose that $\rho:A\to U(V_{\rho})$ is a complex irreducible representation 
with the property that $g\cdot \rho$ is isomorphic to $\rho$ for every 
$g\in G$. Fix an assignment $\sigma:Q\to G$ such that $\pi(\sigma(q))=q$ for all 
$q\in Q$ with $\sigma(1)=1$.  For each $q\in Q$ fix an element $M_{q}\in U(V_{\rho})$ 
such that 
\[
\rho(\sigma(q)^{-1}a\sigma(q))=M_{q}^{-1}\rho(a) M_{q}
\]
for all $a\in A$. We choose $M_{1}=1$.  Let $\alpha_{\rho}\in Z^{2}(Q,\IS^{1})$ 
be the cocycle corresponding to $\rho$ constructed in Proposition \ref{obstruction for representations}. 
Using the cocycle $\alpha_{\rho}\in Z^{2}(Q,\IS^{1})$ 
we can construct a central extension of 
$Q$ by $\IS^{1}$ in the following way: as a set define 
\begin{align} \label{definition extension by S1}
\widetilde{Q}_{\alpha_\rho}=\{ (q,z) ~|~ q\in Q, \ z\in \IS^{1}\}
\end{align} and the product structure 
in $\widetilde{Q}_{\alpha_\rho}$ is  given by the assignment 
$$(q_{1},z_{1})(q_{2},z_{2}):=(q_{1}q_{2},\alpha_{\rho}(q_{1},q_{2})z_{1}z_{2}).$$  
The group  $\widetilde{Q}_{\alpha_\rho}$ is a compact Lie group that fits into a central extension 
\[
1\to \IS^{1}\to \widetilde{Q}_{\alpha_\rho}\to Q\to 1.
\]
Let $G$ act on a compact space $X$ in such a way that 
$A$ acts trivially on $X$.  We can see $X$ as a $\widetilde{Q}_{\alpha_\rho}$ space 
on which the central factor $\IS^{1}$ 
acts trivially. Suppose that $p:E\to X$  is a $(G,\rho)$-equivariant vector 
bundle; that is, $p:E\to X$  is a $G$-equivariant vector bundle such that 
$\IV_{\rho}\otimes \Hom_{A}(\IV_{\rho},E)\cong E$ as $A$-vector bundles.  
In general the vector $\Hom_{A}(\IV_{\rho},E)$ 
does not have a structure of a $G$-equivariant vector bundle that is compatible the 
$G$-structure on $E$. Instead, we are going to show in the next theorem 
that $\Hom_{A}(\IV_{\rho},E)$ has the 
structure of a $\widetilde{Q}_{\alpha_\rho}$-vector bundle on which the central factor $\IS^{1}$ 
acts by multiplication of scalars. This is a key step in our work.

\begin{theorem}\label{centralaction}
Let $X$ be an $G$-space such that $A$ acts trivially on $X$.  Assume that 
$g\cdot \rho\cong \rho$ for every $g\in G$. If $p:E\to X$ is a $(G,\rho)$-equivariant vector bundle, 
then $\Hom_{A}(\IV_{\rho},E)$  has the 
structure of a $\widetilde{Q}_{\alpha_\rho}$-vector bundle on which the central factor $\IS^{1}$ 
acts by multiplication of scalars.  Moreover,  the assignment 
$$[E]\mapsto [\Hom_{A}(\IV_{\rho},E)]$$ defines a natural one to one correspondence 
between isomorphism classes of 
$(G, \rho)$-equivariant vector bundles over $X$ and isomorphism classes 
of $\widetilde{Q}_{\alpha_\rho}$-equivariant 
vector bundles over $X$ for which the central $\IS^{1}$ acts by multiplication of scalars. 
\end{theorem}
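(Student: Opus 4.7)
The plan is to write down an explicit fibrewise $\widetilde{Q}_{\alpha_\rho}$-action on $\Hom_A(\IV_\rho, E)$ using the section $\sigma$ and the lifts $M_q$, to check that the cocycle obstruction to descending this formula to a $Q$-action is exactly $\alpha_\rho$ so that it lifts to a genuine $\widetilde{Q}_{\alpha_\rho}$-action, and then to produce a functorial inverse $F \mapsto \IV_\rho\otimes F$ that recovers a $(G,\rho)$-equivariant bundle.

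For the forward direction, given an $A$-equivariant fibre map $f : V_\rho \to E_x$, I would set
\[
((q,z)\cdot f)(v) := z\cdot\sigma(q)\cdot f(M_q^{-1}v)\in E_{qx},
\]
using that $A$ acts trivially on $X$ so that $\sigma(q)x = qx$ depends only on $q$. The relation $\rho(\sigma(q)^{-1}a\sigma(q)) = M_q^{-1}\rho(a)M_q$ guarantees that the result is again $A$-equivariant. Associativity then reduces to substituting $\sigma(q_1)\sigma(q_2) = \sigma(q_1q_2)\chi(q_1,q_2)$ and pushing $\chi(q_1,q_2)$ across $f$ via $A$-equivariance; the defining identity $\alpha_\rho(q_1,q_2) = \rho(\chi(q_1,q_2))M_{q_2}^{-1}M_{q_1}^{-1}M_{q_1q_2}$ produces precisely the scalar required by the multiplication $(q_1,z_1)(q_2,z_2) = (q_1q_2, z_1z_2\alpha_\rho(q_1,q_2))$ in $\widetilde{Q}_{\alpha_\rho}$. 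The normalizations $\sigma(1)=1$ and $M_1=1$ yield $(1,z)\cdot f = zf$, so the central $\IS^1$ acts by scalars, and continuity is inherited from the $G$-bundle structure of $E$.

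For the inverse, given a $\widetilde{Q}_{\alpha_\rho}$-equivariant vector bundle $F\to X$ on which $\IS^1$ acts by scalars, I would endow $\IV_\rho\otimes F$ with a $G$-action by the rule
\[
g\cdot(v\otimes w) := N_g\, v\otimes (\pi(g),1)\cdot w,\qquad N_g := M_{\pi(g)}\,\rho(\sigma(\pi(g))^{-1}g).
\]
A short computation gives $N_{g_1g_2} = \alpha_\rho(\pi(g_1),\pi(g_2))\,N_{g_1}N_{g_2}$, so that $g\mapsto N_g$ is a projective representation with cocycle $\pi^*\alpha_\rho$; this scalar defect is cancelled by the product $(\pi(g_1),1)(\pi(g_2),1) = (\pi(g_1g_2),\alpha_\rho(\pi(g_1),\pi(g_2)))$ together with the hypothesis that $\IS^1$ acts by scalars on $F$, so that the tensor action is a genuine $G$-action. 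Its restriction to $A \subset G$ acts as $\rho$ on the first factor and trivially on the second, making $\IV_\rho\otimes F$ a $(G,\rho)$-equivariant bundle in the sense of Definition \ref{definitiontwisted1}. The two constructions are mutually inverse up to the canonical isomorphisms $\IV_\rho\otimes\Hom_A(\IV_\rho,E)\to E$, $v\otimes f\mapsto f(v)$, and $F\to\Hom_A(\IV_\rho,\IV_\rho\otimes F)$, $w\mapsto (v\mapsto v\otimes w)$, which one checks intertwine the respective actions using $N_{\sigma(q)} = M_q$; since both assignments are manifestly functorial in bundle morphisms, this yields the claimed natural bijection on isomorphism classes. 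The principal obstacle is the algebraic bookkeeping tying together $\chi(q_1,q_2)$, the lifts $M_q$ and the scalar $\alpha_\rho(q_1,q_2)$; with the normalizations in place, everything else reduces to a direct calculation.
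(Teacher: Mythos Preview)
Your proposal is correct and follows essentially the same route as the paper: the forward $\widetilde{Q}_{\alpha_\rho}$-action $((q,z)\cdot f)(v)=z\,\sigma(q)\cdot f(M_q^{-1}v)$, the inverse $G$-action on $\IV_\rho\otimes F$ via $N_g=M_{\pi(g)}\rho(\sigma(\pi(g))^{-1}g)$, and the verification that the evaluation/coevaluation maps intertwine the two structures are exactly the ingredients the paper uses. Your explicit identification of the identity $N_{g_1g_2}=\alpha_\rho(\pi(g_1),\pi(g_2))\,N_{g_1}N_{g_2}$ is a helpful way to package the cocycle cancellation that the paper leaves implicit.
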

\begin{proof}
Suppose that  $p:E\to X$ is a $G$-vector bundle. Then $\Hom_{A}(\IV_{\rho},E)$ is a 
non-equivariant vector bundle over $X$. Next we give  $\Hom_{A}(\IV_{\rho},E)$ 
an action of $\widetilde{Q}_{\alpha_\rho}$.  For this  suppose that 
$f\in \Hom_{A}(\IV_{\rho},E)_{x}$.  If $q\in Q$ we define 
$q\bullet f\in \Hom_{A}(\IV_{\rho},E)_{q\cdot x}$ 
by 
\[
(q\bullet f)(v)=\sigma(q)\cdot f(M_{q}^{-1}v),
\] 
where $M_{q}\in U(V_{\rho})$ is the 
element chosen above. It is easy to see that with this definition 
$q\bullet f$ is $A$-equivariant and that 
the map 
\begin{align*}
q\bullet:\Hom_{A}(\IV_{\rho},E)&\to \Hom_{A}(\IV_{\rho},E)\\
f\mapsto q\bullet f.
\end{align*}
is continuous. Moreover, if $q_{1}, q_{2}\in Q$ we have
\begin{equation}\label{identitycocycle}
q_{1}\bullet (q_{2}\bullet f)=\alpha_{\rho}(q_{1},q_{2})((q_{1}q_{2})\bullet f).
\end{equation}
Equation \eqref{identitycocycle} allows us to define an action of 
$\widetilde{Q}_{\alpha_\rho}$ on $\Hom_{A}(\IV_{\rho},E)$ 
as follows. If $(q,z)\in \widetilde{Q}_{\alpha_\rho}$ and $f\in \Hom_{A}(\IV_{\rho},E)_{x}$ define 
$$(q,z)\cdot f:=z(q\bullet f).$$ Thus if $v\in V_{\rho}$ then 
\[
((q,z)\cdot f)(v)=z(\sigma(q)\cdot f(M_{q}^{-1}v))= \sigma(q)\cdot (zf(M_{q}^{-1}v)).
\]
It can be checked that this defines an action of $\widetilde{Q}_{\alpha_\rho}$ on $\Hom_{A}(\IV_{\rho},E)$.
Also, this action is fiberwise linear 
and the map $p:\Hom_{A}(\IV_{\rho},E)\to X$ is $\widetilde{Q}_{\alpha_\rho}$-equivariant so that  
$\Hom_{A}(\IV_{\rho},E)$ is a $\widetilde{Q}_{\alpha_\rho}$-equivariant 
vector bundle over $X$. By definition the central factor $\IS^{1}$ acts by multiplication of scalars.   

Suppose now that $p:F\to X$ is a $\widetilde{Q}_{\alpha_\rho}$-equivariant 
vector bundle over $X$ for which the central $\IS^{1}$ acts by multiplication of scalars. 
Given $g\in G$, $f\in  F_{x}$ and $v\in V_{\rho}$ define 
\begin{align} \label{definition action of G on VxF}
g\cdot (v\otimes f):=(M_{\pi(g)} \rho(\sigma(\pi(g))^{-1}g)v)\otimes 
((\pi(g),1)\cdot f )\in (\IV_{\rho}\otimes F)_{\pi(g)\cdot x}
\end{align}
Using the cocycle identities it can be verified that this defines an action 
of $G$ on  $\IV_{\rho}\otimes F$, 
and since it is linear on the fibers, the bundle   $\IV_{\rho}\otimes F$ becomes a  
$G$-vector bundle. Moreover for $a\in A$, as $M_{1}=1$, we have that 
$a\cdot (v\otimes f)=(\rho(a)v)\otimes f$ so that $A$ acts on  $\IV_{\rho}\otimes F$  by the 
representation $\rho$; that is, $p:\IV_{\rho}\otimes F\to X$ is a $(G,\rho)$-equivariant 
vector bundle over $X$. 

Finally we need to show that this defines a one to one correspondence between 
isomorphism classes of 
$(G,\rho)$-equivariant vector bundles over $X$ and isomorphism classes 
of $\widetilde{Q}_{\alpha_\rho}$-equivariant 
vector bundles over $X$ for which the central $\IS^{1}$ acts by multiplication of scalars. 
To this end, assume that $p:E\to X$ is a $(G,\rho)$-equivariant vector bundle over 
$X$, then by definition the map 
\begin{align*}
\beta:\IV_{\rho}\otimes \Hom_{A}(\IV_{\rho},E)&\to E\\
(v,f)&\mapsto f(v)
\end{align*}
is an isomorphism of $A$-vector bundles.  Since  $ \Hom_{A}(\IV_{\rho},E)$ is
a $\widetilde{Q}_{\alpha_\rho}$-equivariant 
vector bundle over $X$ on which the central $\IS^{1}$ acts by multiplication of scalars, we may endow
$\IV_{\rho}\otimes \Hom_{A}(\IV_{\rho},E)$ with the structure of a $G$ vector bundle
as it was done in equation \eqref{definition action of G on VxF}. The map $\beta$ is an
isomorphism of vector bundles and its $G$-equivariance follows from the next equations. For $g \in G$ we have
\begin{align}
\beta(g\cdot (v\otimes f))&=\beta((M_{\pi(g)}\rho(\sigma(\pi(g))^{-1}g)v)\otimes ((\pi(g),1)\cdot f)) \nonumber\\
&=\pi(g)\bullet f(M_{\pi(g)}\rho(\sigma(\pi(g))^{-1}g)v) \nonumber\\
&=\sigma(\pi(g))f(\rho(\sigma(\pi(g))^{-1}g)v)=gf(v)\nonumber \\
&=g\beta(v\otimes f). \label{g action on v x f}
\end{align}
The previous argument shows that $ \beta : \IV_{\rho}\otimes \Hom_{A}(\IV_{\rho},E) \to E$ 
is an isomorphism of $G$-vector bundles.

Now,  if $p:F\to X$ is a $\widetilde{Q}_{\alpha_\rho}$-equivariant 
vector bundle over $X$ for which the central $\IS^{1}$ acts by multiplication of scalars, 
then by equation \eqref{definition action of G on VxF} we know that
$\IV_{\rho}\otimes F$ is a $(G,\rho)$-equivariant vector bundle. 
Let us show that $F$ and $\Hom_{A}(\IV_{\rho},\IV_{\rho}\otimes F) $ are isomorphic
as $\widetilde{Q}_{\alpha_\rho}$-equivariant vector bundles. For this it can be showed in a similar way 
as it was done above that  the canonical isomorphism of vector bundles
\begin{align*}
F \to & \Hom_{A}(\IV_{\rho},\IV_{\rho}\otimes F) \\
x \mapsto & f_x : v \mapsto v \otimes x
\end{align*}
is $\widetilde{Q}_{\alpha_\rho}$-equivariant. 
Therefore the vector bundles $F$ and $\Hom_{A}(\IV_{\rho},\IV_{\rho}\otimes F)$ 
are isomorphic as $\widetilde{Q}_{\alpha_\rho}$-equivariant vector bundles.

We conclude that the inverse map of the assignment $[E]\mapsto [\Hom_{A}(\IV_{\rho},E)$
is precisely the map defined by the assignment $[F] \mapsto [\IV_{\rho}\otimes F]$.
\end{proof}

Theorem \ref{centralaction} provides a useful identification of the $(G, \rho)$-equivariant
K-groups of Definition \ref{definitiontwisted1} with the $\alpha_\rho$-twisted $Q$-equivariant
K-theory groups. For this purpose let us recall the definition of the  $\alpha$-twisted $Q$-equivariant
K-theory groups whenever $\alpha : Q \times Q \to \IS^1$ is a 2-cocycle given in  
\cite[Section 7]{Adem-Ruan}.
 
Consider the $\IS^1$-central extension of $Q$ that $\alpha$ defines
\[
1\to \IS^{1}\to \widetilde{Q}_\alpha\to Q\to 1
\]
with $\widetilde{Q}_\alpha$ as it is defined in \eqref{definition extension by S1}. Let $X$ be a $Q$-space
and endow it with the action of $\widetilde{Q}_\alpha$ induced by the $Q$ action. 
Let ${}^{\alpha}K_{Q}^{0}(X)$ be Grothendieck group of the set of isomorphism 
classes of $\widetilde{Q}_\alpha$ vector bundles over $X$ on which $\IS^{1}$ acts 
by multiplication of scalars on the fibers. For $n>0$ the twisted groups 
${}^{\alpha}K_{Q}^{n}(X)$ are defined as 
${}^{\alpha}\widetilde{K}_{Q}^{0}(\Sigma^{n}X_{+})$.
The groups ${}^{\alpha}K_{Q}^{*}(X)$ are called the $\alpha$-twisted
$Q$-equivariant K-theory groups of $X$. Note that ${}^{\alpha}K_{Q}^{0}(X)$ is a 
free submodule of $K_{\widetilde{Q}_\alpha}^{*}(X)$
and we could have alternatively defined the $\alpha$-twisted $Q$-equivariant  as this submodule.
Furthermore note that
the $\alpha$-twisted $Q$-equivariant vector bundles are the same as the $(\widetilde{Q}_\alpha,u)$-equivariant
bundles where $u : \IS^1 \to U(1)$ is the irreducible representation given by the oriented isomorphisms
of groups $u$ defined by multiplication by scalars.

Applying the definition of the $\alpha$-twisted $Q$-equivariant K-theory groups, 
Theorem \ref{centralaction} implies the following result which is the main result of this 
section:

\begin{corollary}\label{equivalencetwistedK}
Let $X$ be a compact and Hausdorff $G$-space such that $A$ acts trivially on $X$.
Assume furthermore that $\rho : A \to U(V_\rho)$ is a representation whose isomorphism class is fixed by $G$, i.e.
$g\cdot \rho\cong \rho$ for every $g\in G$. Then the assignment 
\begin{align*}
K_{G, \rho}^{*}(X)&\stackrel{\cong}{\to}  {}^{\alpha_{\rho}}K_{Q}^{*}(X)\\
[E]&\mapsto [\Hom_{A}(\IV_{\rho},E)]
\end{align*}
between the $(G,\rho)$-equivariant K-theory of $X$ and the $\alpha_\rho$-twisted $Q$-equivariant 
K-theory of $X$
is a natural isomorphism of $R(Q)$-modules.
\end{corollary}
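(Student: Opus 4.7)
The plan is to leverage Theorem \ref{centralaction} and simply verify that the bijection established there is compatible with all the structure required for the K-theoretic statement: direct sums, suspension, pullback along maps of $G$-spaces, and the $R(Q)$-module structure.

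First, I would verify that the assignment $[E]\mapsto[\Hom_A(\IV_\rho,E)]$ is additive. Given two $(G,\rho)$-equivariant bundles $E_1, E_2$ over $X$, the canonical isomorphism of vector bundles $\Hom_A(\IV_\rho,E_1\oplus E_2)\cong \Hom_A(\IV_\rho,E_1)\oplus\Hom_A(\IV_\rho,E_2)$ is $\widetilde{Q}_{\alpha_\rho}$-equivariant by inspection of the formula $(q,z)\cdot f = z(\sigma(q)\cdot f(M_q^{-1}(-)))$, since this action is defined fiberwise-linearly and componentwise. Combined with Theorem \ref{centralaction} this promotes the set-level bijection to an isomorphism of abelian semigroups $\mathrm{Vec}_{G,\rho}(X)\cong \mathrm{Vec}^{\alpha_\rho}_Q(X)$, and applying the Grothendieck construction yields the isomorphism $K^0_{G,\rho}(X)\cong{}^{\alpha_\rho}K^0_Q(X)$.

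Next, for the higher degrees $n>0$, note that on $\Sigma^n X_+$ the group $A$ still acts trivially (the added basepoint and the suspension coordinates carry trivial $G$-action), so the degree-zero bijection applies verbatim; compatibility with the passage from $K^0_{G,\rho}$ to $\widetilde K^0_{G,\rho}$ is immediate since $\Hom_A(\IV_\rho,-)$ sends the trivial bundle at the basepoint to the trivial bundle. Naturality in maps $f:X\to Y$ of $G$-spaces with trivial $A$-action follows from the canonical isomorphism $\Hom_A(\IV_\rho,f^*E)\cong f^*\Hom_A(\IV_\rho,E)$, which is readily $\widetilde{Q}_{\alpha_\rho}$-equivariant because the $\bullet$-action is defined pointwise on fibers.

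Finally, for the $R(Q)$-module structure I would argue as follows. An element $[V]\in R(Q)$ acts on $K^*_{G,\rho}(X)$ by pullback through $\pi:G\to Q$ and tensor product; the bundle $\pi^*V\otimes E$ remains $(G,\rho)$-equivariant because $V$ carries the trivial $A$-action, so its $A$-isotypical type is unchanged. On the other side $[V]$ acts on ${}^{\alpha_\rho}K^*_Q(X)$ by pullback along $\widetilde{Q}_{\alpha_\rho}\to Q$ and tensor product, which preserves the condition that $\IS^1$ acts by scalars. The key compatibility to check is the natural isomorphism $\Hom_A(\IV_\rho,\pi^*V\otimes E)\cong V\otimes \Hom_A(\IV_\rho,E)$ as $\widetilde{Q}_{\alpha_\rho}$-bundles; this follows from pulling the $A$-trivial factor $V$ outside the $\Hom_A$, and the equivariance is immediate from the explicit formulas because the cocycle $\alpha_\rho$ appears in the same way on both sides. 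The main (minor) obstacle is precisely this last bookkeeping, namely tracking the cocycle through the tensor product to confirm compatibility of actions; everything else is formal given Theorem \ref{centralaction}.
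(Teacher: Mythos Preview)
Your proposal is correct and follows the paper's approach: the paper states this corollary as an immediate consequence of Theorem~\ref{centralaction} and the definition of the $\alpha_\rho$-twisted $Q$-equivariant K-theory, without spelling out a proof. You have simply made explicit the routine verifications (additivity, behavior under suspension, naturality, and the $R(Q)$-module compatibility) that the paper leaves implicit.
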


\section{Decomposition formula in Equivariant K-theory}\label{section3}

In this section we provide a decomposition of $K_G^*(X)$ whenever $G$ 
has a normal subgroup acting trivially on $X$. This decomposition 
is the main goal of this article.

Suppose that $G$ is a finite group for which we have a  
normal subgroup $A$. Let $Q=G/A$ so that we have a  group extension 
\begin{equation}\label{cext1}
1\to A\stackrel{\iota}\rightarrow G\stackrel{\pi}\rightarrow Q\to  1.
\end{equation}
Recall that the group $G$ acts on the set $\Irr(A)$ by conjugation. 
Notice that the group $A$ acts trivially on $\Irr(A)$ 
because if $x\in A$ and $\rho:A\to U(V_{\rho})$ is a representation  
then $(x\cdot \rho)(a)=\rho(x)^{-1}\rho(a)\rho(x)$ 
for all $a\in A$  and thus $[x \cdot \rho]=[\rho]$ for $x\in A$.  Therefore the action 
of $G$ on $\text{Irr}(A)$ descends to an action of $Q$ on  $\Irr(A)$ and the orbits of the 
action of $G$ on $\text{Irr}(A)$ correspond to the orbits of the action of 
$Q$ on $\text{Irr}(A)$. If  $\rho:A\to U(V_{\rho})$ is an  irreducible representation we can 
consider the group $Q_{[\rho]}$ which is the isotropy subgroup of the action of 
$Q$ at $[\rho]$.   Let $G_{[\rho]}=\pi^{-1}(Q_{[\rho]})\subset G$. Notice that  
$G_{[\rho]}$ is precisely the subgroup of $G$ such that $g\cdot [\rho]=[\rho]$ 
and we have a group extension 
\[
1\to A\stackrel{\iota}\rightarrow G_{[\rho]}\stackrel{\pi}\rightarrow Q_{[\rho]}\to  1.
\]
Assume that $G$ acts on a compact and Hausdorff space $X$ in such a way that 
$A$ acts trivially on $X$. Let $p:E\to X$ be a $G$-equivariant 
vector bundle. Since $A$ acts trivially on $X$ then each fiber of $E$ can be seen as an  
$A$-representation. We can give $E$ a Hermitian metric that is invariant under the action of 
$A$. As before we have  a natural isomorphism 
of $A$-vector bundles
\[
\beta:\bigoplus_{[\tau]\in \Irr(A)}\IV_{\tau}\otimes \Hom_{A}(\IV_{\tau},E)\to E.
\] 
As pointed out before, in general each of the pieces $\IV_{\tau}\otimes \Hom_{A}(\IV_{\tau},E)$ 
does not have the structure of a $G$-equivariant vector bundle. However, 
the previous decomposition can be used to obtain a decomposition 
of $E$ as a direct sum of $G$-vector bundles by considering the different 
orbits of the action of $Q$ on $\Irr(A)$. We claim the following theorem:

\begin{theorem}\label{general theorem}
Suppose that $A\subset G$ is a normal subgroup and $X$ is a compact, Hausdorff $G$-space on 
which $A$ acts trivially. Then there is a natural 
isomorphism 
\begin{align*}
\Psi_{X}:K_{G}^{*}(X) &\to 
\bigoplus_{[\tau]\in G \backslash \Irr(A)}K^{*}_{G_{[\tau]},\tau}(X) \\
E & \mapsto  \bigoplus_{[\tau]\in G \backslash \Irr(A)}  \IV_{\tau}\otimes \Hom_{A}(\IV_{\tau},E)
\end{align*}
where $[\tau]$ runs over the orbits of $G$ on $\Irr(A)$ and $G_{[\tau]}$ is the isotropy group of $[\tau]$. This isomorphism
is functorial on maps $X \to Y$ of $G$-spaces on which $A$ acts trivially. 
\end{theorem}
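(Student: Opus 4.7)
The strategy is to combine the pointwise $A$-isotypic decomposition of $E$ with the observation that the $G$-action on $\Irr(A)$ permutes the isotypic summands, and then to regroup them according to the $G$-orbits. Fix a representative $\tau$ in each orbit $[\tau] \in G\backslash \Irr(A)$. For a $G$-equivariant vector bundle $p:E\to X$, the $A$-isotypic summand $\widetilde{E}_\tau := \IV_\tau \otimes \Hom_A(\IV_\tau, E)$ is preserved by the stabilizer $G_{[\tau]}$, because an element $g\in G$ carries the $\tau$-isotypic piece of each fiber to the $(g\cdot \tau)$-isotypic piece, and these coincide exactly when $g\in G_{[\tau]}$. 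A fiberwise application of Schur's lemma $\Hom_A(\IV_\tau,\IV_\tau) = \IC$ shows $\widetilde{E}_\tau$ is a $(G_{[\tau]}, \tau)$-equivariant vector bundle in the sense of Section \ref{section2}, so the assignment $[E]\mapsto ([\widetilde{E}_\tau])_{[\tau]}$ descends to a well-defined homomorphism $\Psi_X$ of Grothendieck groups.

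For bijectivity I would construct an inverse by induction. Regrouping the $A$-isotypic pieces of $E$ by $G$-orbit gives a $G$-equivariant decomposition $E \cong \bigoplus_{[\tau]} E_{[\tau]}$, where $E_{[\tau]}$ is the subbundle formed by the isotypic summands indexed by the elements of $G\cdot [\tau]$. The key point is that the natural $G$-equivariant map
\[
G \times_{G_{[\tau]}} \widetilde{E}_\tau \longrightarrow E_{[\tau]}, \qquad [g, e] \longmapsto g\cdot e,
\]
is an isomorphism of $G$-vector bundles: coset representatives of $G/G_{[\tau]}$ are in bijection with the elements of the orbit $G\cdot [\tau]$, and the map transports each copy of $\widetilde{E}_\tau$ onto the corresponding isotypic piece of $E_{[\tau]}$. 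Conversely, starting from a $(G_{[\tau]}, \tau)$-equivariant bundle $F$, the $\tau$-isotypic part of the induced bundle $G\times_{G_{[\tau]}} F$ is canonically identified with $F$ as a $G_{[\tau]}$-bundle, so the two constructions are mutually inverse on isomorphism classes.

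The extension to higher degrees is automatic since $A$ acts trivially on $\Sigma^n X_+$, and naturality under a $G$-equivariant map $f:X\to Y$ follows from the fact that pullback commutes with the $A$-isotypic decomposition and with the functor $\Hom_A(\IV_\tau,-)$. The main technical obstacle I anticipate is showing that the induction map $G\times_{G_{[\tau]}}\widetilde{E}_\tau \to E_{[\tau]}$ is a well-defined continuous $G$-equivariant isomorphism of vector bundles: this requires carefully tracking how the $G$-action intertwines the identifications of the conjugate representations $g\cdot \tau$ with $\tau$ composed with conjugation by $g^{-1}$, and verifying continuity through the local triviality of the isotypic decomposition \cite[Proposition 2.2]{SegalK}.
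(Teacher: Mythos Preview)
Your proposal is correct and follows essentially the same route as the paper. Both arguments group the $A$-isotypic pieces by $G$-orbit, observe that the $\tau$-isotypic summand carries a $G_{[\tau]}$-structure, and produce an inverse by inducing a $(G_{[\tau]},\tau)$-bundle up to $G$. The only difference is packaging: you phrase the inverse as the induced bundle $G\times_{G_{[\tau]}}F$ (pushed forward along $G\times_{G_{[\tau]}}X\to X$), whereas the paper writes the same object explicitly as $\bigoplus_j (g_j^{-1})^*F$ with a $G$-action defined by hand via coset representatives, and uses the intertwiners $M_h\in U(V_\tau)$ to write down the $G_{[\tau]}$-action on $\IV_\tau\otimes\Hom_A(\IV_\tau,E)$ rather than transporting it along $\beta$. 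Your anticipated technical obstacle is exactly the computation the paper carries out with the $M_h$; the paper's verification that $\beta$ is $G$-equivariant (equation~\eqref{g action on v x f}) is precisely the check that your induction map is an isomorphism.
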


\begin{proof}
Decompose the set $\Irr(A)$ in the form $\Irr(A)=\cala_{1}\sqcup\cala_{1}\sqcup\cdots\sqcup\cala_{k}$, where 
$\cala_{1},\cala_{2},\dots, \cala_{k}$ are the different $G$-orbits on 
$\Irr(A)$, i.e. $G \backslash \Irr(A) =\{\cala_{1},\cala_{2},\dots, \cala_{k}\}$.
For every $1\le i\le k$ define 
\[
E_{\cala_{i}}=\bigoplus_{[\tau]\in \cala_{i}}\IV_{\tau}\otimes \Hom_{A}(\IV_{\tau},E).
\]
Notice that each $E_{\cala_{i}}$ is an $A$-equivariant vector bundle over $X$, and moreover the map 
\[
\beta:\bigoplus_{i=1}^{k}E_{\cala_{i}}=
\bigoplus_{[\tau]\in \Irr(A)}\IV_{\tau}\otimes \Hom_{A}(\IV_{\tau},E)\to E
\]
defines an isomorphism of $A$-vector bundles. We will show that each $E_{\cala_{i}}$ is a $G$-vector bundle 
and that the map $\beta$ is $G$-equivariant. 

Let us fix $1\le i\le k$ and an irreducible representation $\rho:A\to U(V_{\rho})$ such that 
$[\rho]\in\cala_{i}$. Therefore all the representations in $\cala_{i}$ are of the form 
$[g\cdot \rho]$. Fix representatives $g_{1}=1,g_{2},\dots, g_{n_i}$ of the different cosets in 
$G/G_{[\rho]}$; that is, $G=\bigsqcup_{j=1}^{n_i}g_{j}G_{[\rho]}$. Therefore 
$$E_{\cala_{i}}=\bigoplus_{j=1}^{n_i}\IV_{g_{j}\cdot\rho}\otimes \Hom_{A}(\IV_{g_{j}\cdot \rho},E).$$
We first notice that $\IV_{\rho}\otimes \Hom_{A}(\IV_{\rho},E)$ has 
the structure of a $G_{[\rho]}$-vector bundle. For this suppose that $h\in G_{[\rho]}$ and  
$v\otimes f\in (\IV_{\rho}\otimes \Hom_{A}(\IV_{\rho},E))_{x}$. A first candidate for the action of $h$ 
on $v\otimes f$ is the element  
$v \otimes (c_h \circ f)\in \IV_{h\cdot\rho}\otimes \Hom_{A}(\IV_{h\cdot\rho},E)$, 
where $c_h \circ f(w)=hf(w)$. This does not define an action of $G_{[\rho]}$ 
as we land in a different (but isomorphic) vector bundle. We  can correct this in the following way. 
By definition for every $h\in G_{[\rho]}$ we have $h\cdot \rho\cong \rho$. 
Therefore for every $h\in G_{[\rho]}$ we can choose some element $M_{h}\in U(V_{\rho})$ 
such that $\rho(h^{-1}ah)=M^{-1}_{h}\rho(a)M_{h}$ for all $a\in A$ and 
we choose $M_{a}=\rho(a)$ for all $a\in A$. 
For $h\in G_{[\rho]}$ and $v\otimes f\in \IV_{\rho}\otimes \Hom_{A}(\IV_{\rho},E)$ we define 
\[
h\star (v\otimes f)=(M_{h}v)\otimes (h\bullet f),
\]
where $(h\bullet f)(w)=hf(M_{h}^{-1}w)$.   In a similar way as in equations  \eqref{g action on v x f} 
it can be checked that this defines a structure of a $G_{[\rho]}$-vector bundle 
on $\IV_{\rho}\otimes \Hom_{A}(\IV_{\rho},E)$ in such a way that the evaluation map 
\[
\beta:\IV_{\rho}\otimes \Hom_{A}(\IV_{\rho},E)\to E,  \ \  v\otimes f \mapsto f(v)
\]
is a $G_{[\rho]}$-equivariant isomorphism onto its image (we may take $M_h$ as the transformation
 $M_{\pi(h)}M_{\sigma(\pi(h))^{-1}h}$
defined in equations \eqref{g action on v x f}). With this in mind we can define an 
action of $G$ on $E_{\cala_{i}}$ in the following way. Suppose that $g\in G$ and that 
$v\otimes f\in (\IV_{g_{j}\cdot\rho}\otimes \Hom_{A}(\IV_{g_{j}\cdot\rho},E))_{x}$. Decompose 
$gg_{j}$ in the form $gg_{j}=g_{l}h$, where $1\le l\le n_i$ and $h\in G_{[\rho]}$. In other 
words $g_{l}$ is the representative chosen for the coset $(gg_{j})G_{[\rho]}$ and 
$h=g_{l}^{-1}gg_{j}$. Define 
\[
g\star(v\otimes f):=(M_{h}v)\otimes (g\bullet f)\in 
(\IV_{g_{l}\cdot\rho}\otimes \Hom_{A}(\IV_{g_{l}\cdot\rho},E))_{gx},
\] 
where $(g\bullet f)(w)=gf(M_{h}^{-1}w)$.  Let us show that this defines an action of 
$G\in E_{\cala_{i}}$. 

We show first that $g\bullet f\in \Hom_{A}(\IV_{g_{l}\cdot\rho},E))_{gx}$ 
when $f\in \Hom_{A}(\IV_{g_{j}\cdot\rho},E))_{x}$. To see this recall that the representation 
$\IV_{g_{j}\cdot\rho}$ has as underlying spce $V_{\rho}$ and the action of $A$ is given by 
$a\cdot w= \rho(g_{j}^{-1}ag_{j})w$. Therefore, the fact that  $f\in \Hom_{A}(\IV_{g_{j}\cdot\rho},E))_{x}$ 
means that for all $a\in A$ and all $w\in V_{g_{j}\cdot\rho}$ we have 
\[
f(\rho(g_{j}^{-1}ag_{j})w)=af(w).
\]
With this in mind, for all $w\in V_{g_{l}\cdot\rho}$ and all $a\in A$ we have 
\begin{align*}
(g\bullet f)(a\cdot w)&=(g\bullet f)(\rho(g_{l}^{-1}ag_{l})w)=gf(M_{h}^{-1}\rho(g_{l}^{-1}ag_{l})w)\\
&=gf((M_{h}^{-1}\rho(g_{l}^{-1}ag_{l})M_{h})M_{h}^{-1}w)=
gf(\rho(h^{-1}g_{l}^{-1}ag_{l}h)M_{h}^{-1}w)\\
&=gf(\rho(g_{j}^{-1}g^{-1}agg_{j})M_{h}^{-1}w)
=g(g^{-1}ag)f(M_{h}^{-1}w)\\
&=a(g\bullet f)(w).
\end{align*}
Therefore $g\bullet f\in \Hom_{A}(\IV_{g_{l}\cdot\rho},E)_{gx}$. On the other hand, notice that 
if $g\in G$ and $v\otimes f\in (\IV_{g_{j}\cdot\rho}\otimes \Hom_{A}(\IV_{g_{j}\cdot\rho},E))_{x}$ 
then 
\[
\beta(g\star (v\otimes f))= gf(M_{h}^{-1}M_hv)=gf(v)=g\beta(f\otimes v).
\]
Since the evaluation map $\beta$ is injective and preserves the $G$-action, then it follows that 
for every $g_{1}, g_{2}\in G$ we have $g_{1}\star(g_{2}\star (v\otimes f))=(g_{1}g_{2})\star(v\otimes f)$. 
The above argument proves that for each $1\le i\le k$ the vector bundle $E_{\cala_{i}}$ has the structure of 
$G$-vector bundle. Moreover, the map 
\[
\beta:\bigoplus_{i=1}^{k}E_{\cala_{i}}\to E
\]
is an isomorphism of $A$-vector and the map $\beta$ is $G$-equivariant so that 
$\beta$ is an isomorphism of $G$-vector bundles.  

Now, if for each $\cala_{i}$ we choose a representation $[\tau_i] \in \cala_{i}$ we may write the map $\Psi_X$ as the
direct sum $\bigoplus_{i=1}^k \Psi_X^i$ of the maps 
\begin{align}
\Psi_X^i : K_G^*(X) \to K_{G_{[\tau_i]}, \tau_i}(X), \ \ \ E \mapsto \IV_{\tau_i}\otimes \Hom_{A}(\IV_{\tau_i},E).\label{projection map}
\end{align}
Note that $\Psi_X^i(E_{\cala_{j}})= 0$ for $i \neq j$.

Let us construct the map $ K_{G_{[\tau_i]}, \tau_i}(X) \to K_G^*(X) $ which will be the right inverse of $\Psi_X^i$.

Let $\rho=\tau_i$ and consider a vector bundle $F \in \text{Vec}_{G_{[\rho]}, \rho}(X)$.
We need to construct a $G$-vector bundle from $F$ taking into account that $G$ acts on $X$.

For $g\in G$ let $g^*F:=\{(x,f) \in X \times F | gx = \pi f\}$ be the pullback bundle where $\pi:F \to X$ is the projection map.
Consider the bundle $$\bigoplus_{j =1}^n (g_j^{-1})^*F$$  where $g_{1}=1,g_{2},\dots, g_{n}$ are fixed elements of the different cosets in 
$G/G_{[\rho]}$. Endow $\bigoplus_{j =1}^n (g_j^{-1})^*F$ with a $G$ action in the following way. For $(x,f) \in (g_j^{-1})^*F$
and $g \in G$, let $g_l \in G$ and $h \in G_{[\rho]}$ be such that $gg_j=g_lh$. Define the action of $g$ as follows:
$$g \circ (x,f) := (gx, hf) \in (g_l^{-1})^*F.$$
For another $\bar{g} \in G$, let $g_m \in G$ and $e \in G_{[\rho]}$ such that $\bar{g}gg_j=g_me$ and hence $\bar{g}g_l=g_meh^{-1}$. 
Then we have the following equalities:
\begin{align*}
\bar{g} \circ (g \circ (x,f))= \bar{g} \circ (gx,hf)= (\bar{g}gx, eh^{-1}hf)=(\bar{g}gx, ef)= \bar{g}g \circ (x,f)
\end{align*}
which imply that $\bigoplus_{j =1}^n (g_j^{-1})^*F$ is a $G$-vector bundle compatible with the $G$ action on $X$.

Note that for $h \in G_{[\rho]}$ and $(x,f) \in (g_j^{-1})^*F$ we have that
$$g_j h g_j^{-1} \circ (x,f) = (g_j h g_j^{-1}x,hf),$$
and therefore for $k = g_j h g_j^{-1} \in G_{g_j \cdot [\rho]}$ we have $k \circ (x,f) = (kx,g_j^{-1}kg_jf)$.
This implies that the restricted action of $G_{g_j \cdot [\rho]} \subset G$ on $(g_j^{-1})^*F$ matches the conjugation
action that can be defined on the the bundle $(g_j^{-1})^*F$; in particular the restriction of the $G_{[\rho]}$-action on
$1^*F$ matches the original action on $F$.

Now, since we have that $\IV_{\rho}\otimes \Hom_{A}(\IV_{\rho},\bigoplus_{j =1}^n (g_j^{-1})^*F) \cong F$
as $G_{[\rho]}$-vector bundles, we have that at the level of K-theory we obtain that
$$\Psi_X \left( \bigoplus_{j =1}^n (g_j^{-1})^*F \right) = F \in \text{Vec}_{G_{[\rho]}, \rho}(X).$$

Therefore we have that the maps $\Psi_X^i$ have right inverses, and hence we conclude that the map $\Psi_X$ is indeed
an isomorphism.

The functoriality follows from the fact that the bundles
$\IV_{\tau}\otimes \Hom_{A}(\IV_{\tau},f^*E)$ and
$f^*\left(\IV_{\tau}\otimes \Hom_{A}(\IV_{\tau},E) \right)$
are canonically isomorphic as $(G_{[\tau]},\tau)$-equivariant
bundles whenever $f:Y \to X$ is a $G$-equivariant map from
spaces on which $A$ acts trivially.
\end{proof}

Theorem \ref{general theorem} and Corollary \ref{equivalencetwistedK} imply the main result of this article.

\begin{theorem} \label{decomposition in twisted equivariant K-theories}
Suppose that $A\subset G$ is a normal subgroup and $X$ is a compact, Hausdorff $G$-space on 
which $A$ acts trivially. Then there is a natural 
isomorphism 
\begin{align*}
\Psi_{X}:K_{G}^{*}(X) &\to 
\bigoplus_{[\tau]\in G \backslash \Irr(A)}{}^{\alpha_\tau}K^{*}_{Q_{[\tau]}}(X) \\
E & \mapsto  \bigoplus_{[\tau]\in G \backslash \Irr(A)}  \Hom_{A}(\IV_{\tau},E).
\end{align*}
This isomorphism is functorial on maps $X \to Y$ of $G$-spaces on which $A$ acts trivially. 
\end{theorem}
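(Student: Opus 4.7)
The plan is to combine the two main results already established in the paper, namely Theorem \ref{general theorem} and Corollary \ref{equivalencetwistedK}, so the proof is essentially one of assembly rather than of genuinely new content. First I would invoke Theorem \ref{general theorem} to obtain the natural decomposition
\[
K_G^*(X) \stackrel{\cong}{\longrightarrow} \bigoplus_{[\tau] \in G\backslash \Irr(A)} K^*_{G_{[\tau]},\tau}(X),
\]
which sends a $G$-bundle $E$ to its orbit-wise isotypical regrouping. The conceptual point (already carried out in Theorem \ref{general theorem}) is that while individual isotypical pieces $\IV_\tau \otimes \Hom_A(\IV_\tau, E)$ are only $A$-equivariant, the full orbit-summand $E_{\cala_i}$ naturally carries a compatible $G$-action, and the summand corresponding to the identity coset of $G/G_{[\tau_i]}$ captures all of $E_{\cala_i}$ as a $(G_{[\tau_i]}, \tau_i)$-equivariant bundle.

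Next I would apply Corollary \ref{equivalencetwistedK} orbit by orbit to identify each $K^*_{G_{[\tau]},\tau}(X)$ with the twisted equivariant group ${}^{\alpha_\tau} K^*_{Q_{[\tau]}}(X)$ via the assignment $[F] \mapsto [\Hom_A(\IV_\tau, F)]$. This step rests on Theorem \ref{centralaction}, which endows $\Hom_A(\IV_\tau, F)$ with a canonical $\widetilde{Q}_{\alpha_\tau}$-action under which the central $\IS^1$ acts by scalar multiplication. Composing the two natural isomorphisms and applying the assignment to the bundle $\IV_\tau \otimes \Hom_A(\IV_\tau, E)$ coming from the first step, the tensor factor $\IV_\tau$ is absorbed and one is left with
\[
E \longmapsto \bigoplus_{[\tau] \in G\backslash \Irr(A)} \Hom_A(\IV_\tau, E),
\]
which is precisely the formula in the statement.

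Naturality on $G$-equivariant maps $f: X \to Y$ between $G$-spaces on which $A$ acts trivially reduces to naturality of each constituent isomorphism. Theorem \ref{general theorem} asserts functoriality directly, and for Corollary \ref{equivalencetwistedK} it is a consequence of the canonical identification $\Hom_A(\IV_\tau, f^*E) \cong f^*\Hom_A(\IV_\tau, E)$ as $\widetilde{Q}_{\alpha_\tau}$-equivariant bundles, which is just the universal property of pullback applied fiberwise.

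The only real obstacle is bookkeeping: the cocycle $\alpha_\tau$ and the isotropy group $Q_{[\tau]}$ depend on a choice of representative within the orbit $[\tau] \in G\backslash \Irr(A)$, so one should check that a different choice $\tau' = g\cdot \tau$ produces canonically isomorphic twisted K-groups. Since $\alpha_\tau$ and $\alpha_{g\cdot \tau}$ differ by a coboundary and $Q_{[\tau]}$, $Q_{[g\cdot \tau]}$ are conjugate inside $Q$, this identification is routine, and with it in hand the combination of the two previously established isomorphisms yields the desired natural isomorphism $\Psi_X$.
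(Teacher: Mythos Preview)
Your proposal is correct and follows essentially the same approach as the paper: the paper's proof also derives the theorem directly from Theorem \ref{general theorem} and Corollary \ref{equivalencetwistedK}, with the only additional remark being that the canonical map $\Hom_{A}(\IV_{\tau},E) \cong \Hom_{A}\left(\IV_{\tau},\IV_{\tau}\otimes \Hom_{A}(\IV_{\tau},E)\right)$ is an isomorphism of $\alpha_\tau$-twisted $Q_{[\tau]}$-equivariant bundles, which is exactly your ``the tensor factor $\IV_\tau$ is absorbed'' step. Your extra discussion of naturality and independence of the choice of orbit representative is reasonable elaboration but goes beyond what the paper spells out.
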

\begin{proof}
The result follows from the fact that the canonical map
$$\Hom_{A}(\IV_{\tau},E) \cong \Hom_{A}\left(\IV_{\tau},\IV_{\tau}\otimes \Hom_{A}(\IV_{\tau},E) \right) $$
induces an isomorphism of $\alpha_\tau$-twisted $Q_{[\tau]}$-equivariant bundles.
\end{proof}

As a particular application of the previous theorem, suppose that $X$ is a compact 
space on which $Q$ acts freely. Then we can see $X$ as a $G$-space in such a 
way that for every $x\in X$ we have that $G_{x}=A$. In this particular case the 
twisted equivariant groups  ${}^{\alpha_\tau}K^{*}_{Q_{[\tau]}}(X)$ that appear in 
the previous theorem can be seen as suitable non-equivariant twisted K-groups as is explained 
next. For this suppose that $\calh$ is a separable infinite dimensional Hilbert space. 
Let $PU(\calh)$ denote the projective unitary group with the 
strong operator topology. Given a space $Y$ together with a continuous 
function $f:Y\to BPU(\calh)$, pulling back the 
universal principal $PU(\calh)$-bundle $EPU(\calh)\to BPU(\calh)$ along $f$,  we obtain a principal $PU(\calh)$-bundle 
$P_{f}\to Y$. Associated to the pair $(Y;f)$ we may define the twisted K-theory groups
$$K^{-p}(Y;f) := \pi_p(\Gamma(\text{Fred}(P_{f})))$$
as the homotopy groups of the space of sections of the associated Fredholm bundle 
$\text{Fred}(P_{f}):=P_{f}\times_{PU(\calh)}\text{Fred}(\calh)$. This is the usual 
definition of non-equivariant twisted K-groups given in \cite[Def. 3.3]{Atiyah-Segal}.

Suppose now that $\alpha : Q \times Q \to \IS^1$ is a 2-cocycle and let 
\[
1\to \IS^{1}\to \widetilde{Q}_\alpha\to Q\to 1
\]
be the central extension that $\alpha$ defines as in (\ref{definition extension by S1}). 
Let $\calh_{\widetilde{Q}_{\alpha}}$ 
be a separable Hilbert space endowed with a continuous linear action of 
$\widetilde{Q}_{\alpha}$ such that all representations of
$\widetilde{Q}_{\alpha}$ appear infinitely many times. 
The Hilbert space $\calh_{\widetilde{Q}_{\alpha}}$ splits into isotypical components
relative to the action of $\IS^1$
$$\calh_{\widetilde{Q}_{\alpha}} \cong \bigoplus_{k \in \IZ} \calh_{\widetilde{Q}_{\alpha}}^k$$
with $\calh_{\widetilde{Q}_{\alpha}}^k$ being the eigenspace of the degree $k$ map.
The eigenspace $\calh :=\calh_{\widetilde{Q}_{\alpha}}^1$ is a ${\widetilde{Q}_{\alpha}}$ 
representation on which  $\IS^1$ acts by multiplication of scalars and all the representations 
of this kind appear infinitely number of times. Hence
we have an induced homomorphism of groups 
$\widetilde{\phi}_\alpha :{\widetilde{Q}_{\alpha}} \to U(\calh)$
which induces a homomorphism $ \phi_\alpha: Q \to PU(\calh)$
making the following diagram of group extensions commutative
$$\xymatrix{
\IS^1 \ar[r] \ar[d]^\cong & \widetilde{Q}_{\alpha} 
\ar[r] \ar[d]^{\widetilde{\phi}_\alpha} & Q \ar[d]^{\phi_\alpha} \\
 \IS^1 \ar[r]& U(\calh) \ar[r]  & PU(\calh).
}$$
Using the homomorphism  $ \phi_\alpha: Q \to PU(\calh)$ 
and the natural action of $PU(\calh)$ on $\text{Fred}(\calh)$ 
we can  obtain an action of $Q$ on $\text{Fred}(\calh)$. Recall 
that the $\alpha$-twisted $Q$-equivariant K-theory groups are generated
by the $\widetilde{Q}_{\alpha}$-equivariant vector bundles over $X$ on which 
$\IS^1$ acts by multiplication of scalars, therefore using the alternative 
definition of equivariant $K$-theory using Fredholm operators, 
the group ${}^\alpha K_Q^*(X)$ may be alternatively defined
as the homotopy groups of the space 
$$
\text{map}(X,\text{Fred}(\calh))^Q= 
\{ f:X \to \text{Fred}(\calh) | f(q \cdot x) =\phi_\alpha(q) \cdot f(x) \} 
$$
of $Q$-equivariant maps from $X$ to $\text{Fred}(\calh)$, i.e.
\begin{equation}\label{cong1} 
{}^\alpha K_Q^{-p}(X) \cong \pi_p\left(\text{map}(X,\text{Fred}(\calh))^Q\right).
\end{equation}

In the particular case on which $Q$ acts freely on $X$ there is an homeomorphism
of topological spaces 
\begin{equation}\label{cong2}
\text{map}(X, \text{Fred}(\calh))^Q \stackrel{\cong}{\to} 
\Gamma(X \times_Q \text{Fred}(\calh) \to X/Q) \ \ f \mapsto \left([f] : [x] \mapsto [x,f(x)] \right) 
\end{equation}
between the space of $Q$-equivariant maps from $X$ to $\text{Fred}(\calh)$ and
the space of sections of the $\text{Fred}(\calh)$-bundle 
$X \times_Q \text{Fred}(\calh) \to X/Q$. 

On the other hand, since  $X$ is a free $Q$-space
there is a unique up to homotopy $Q$-equivariant map $X \to EQ$ inducing 
a map $h: X/G \to BQ$ at the level of the quotient spaces. Combining 
$h$ with the map $B\phi_\alpha$ we obtain the following commutative diagram
$$\xymatrix{
X \times_Q \text{Fred}(\calh) \ar[d] \ar[rr] && EPU(\calh) \times_{PU(\calh)}  
\text{Fred}(\calh) \ar[d]\\
X/Q \ar[r]^{h} & BQ \ar[r]^{B\phi_\alpha} & BPU(\calh),
}$$
where the outer square is a pullback square. Therefore   
\begin{equation}\label{cong3}
K^{-p}(X/Q, B\phi_\alpha \circ h)\cong\pi_p(\Gamma(X \times_Q \text{Fred}(\calh) \to X/Q)).
\end{equation}
Using (\ref{cong1}), (\ref{cong2}) and (\ref{cong3}) we conclude that if 
$Q$ acts freely on $X$, the $\alpha$-twisted $Q$-equivariant K-theory of $X$ is canonically isomorphic
to the twisted K-theory of the pair $(X/Q, B\phi_\alpha \circ h)$, i.e.
\begin{equation}\label{equivalentdefinitions}
{}^\alpha K_Q^*(X) \cong K^*(X/Q, B\phi_\alpha \circ h).
\end{equation}

Note that the cohomology class  $[\alpha^\IZ] \in H^3(BQ, \IZ)$
that $\alpha$ defines, is the pullback of the generator of $H^3(BPU(\calh), \IZ)=\IZ$
under the classifying map $B\phi_\alpha$. Hence $h^*[\alpha^\IZ] \in H^3(X/Q, \IZ)$
is the pullback under $B\phi_\alpha \circ h$ of the generator
of $H^3(BPU(\calh), \IZ)$. The cohomology class $h^*[\alpha^\IZ]$
classifies the isomorphism class of the projective unitary bundle over $X/Q$
which $\phi_\alpha$ induces.

Combining (\ref{equivalentdefinitions}) with Theorem \ref{general theorem} we obtain:

\begin{theorem}\label{decompositionK}
Let $A$ be a normal subgroup of a finite group $G$ and denote $Q=G/A$. Let
$X$ be a free $Q$-space and consider it as a $G$-space on which $A$ acts trivially.
Then there is a natural 
decomposition of the $G$-equivariant K-theory of $X$ into a direct sum of
twisted K-theories in the following way
\[
\Psi_{X}:K_{G}^{*}(X)\to 
\bigoplus_{[\rho]\in Q \backslash \Irr(A)} K^{*}(X/Q_{[\rho]}; B\phi_{\alpha_\rho} \circ h_\rho),
\]
where $\phi_{\alpha_\rho}: Q_{[\rho]} \to PU(\calh)$ is the stable homomorphism
defined by $\alpha_\rho$ and $h_\rho:X/Q_{[\rho]} \to BQ_{[\rho]}$ is the classifying
map of the $Q_{[\rho]}$-principal bundle $X \to X/Q_{[\rho]}$.
\end{theorem}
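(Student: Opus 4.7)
The plan is to combine Theorem \ref{decomposition in twisted equivariant K-theories} with the chain of identifications \eqref{cong1}--\eqref{equivalentdefinitions} already established in this section. By Theorem \ref{decomposition in twisted equivariant K-theories}, for any compact Hausdorff $G$-space $X$ on which $A$ acts trivially there is a natural isomorphism
\[
\Psi_{X}:K_{G}^{*}(X) \stackrel{\cong}{\to}
\bigoplus_{[\rho]\in Q \backslash \Irr(A)}{}^{\alpha_\rho}K^{*}_{Q_{[\rho]}}(X),
\]
so the problem reduces to identifying each summand ${}^{\alpha_\rho}K^{*}_{Q_{[\rho]}}(X)$ with the non-equivariant twisted K-group $K^{*}(X/Q_{[\rho]}; B\phi_{\alpha_\rho} \circ h_\rho)$.

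For this identification I would first observe that since $Q$ acts freely on $X$ and $Q_{[\rho]} \subset Q$ is a subgroup, the restricted action of $Q_{[\rho]}$ on $X$ is also free; this is the hypothesis needed to apply the argument culminating in \eqref{equivalentdefinitions}. Concretely, one forms the stable homomorphism $\phi_{\alpha_\rho}: Q_{[\rho]} \to PU(\calh)$ associated with the cocycle $\alpha_\rho$ via the isotypical decomposition of a universal $\widetilde{Q_{[\rho]}}_{\alpha_\rho}$-Hilbert space, and uses it both to describe ${}^{\alpha_\rho}K^{*}_{Q_{[\rho]}}(X)$ as homotopy classes of $Q_{[\rho]}$-equivariant maps $X \to \Fred(\calh)$ as in \eqref{cong1}, and to build the Fredholm bundle $X \times_{Q_{[\rho]}} \Fred(\calh) \to X/Q_{[\rho]}$ classified by $B\phi_{\alpha_\rho} \circ h_\rho$.

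Then the homeomorphism \eqref{cong2} identifies the space of $Q_{[\rho]}$-equivariant maps with sections of this associated Fredholm bundle, and the pullback diagram following \eqref{cong3} identifies those sections with the twisted K-theory $K^{*}(X/Q_{[\rho]}; B\phi_{\alpha_\rho} \circ h_\rho)$. Composing these natural isomorphisms orbit by orbit and assembling across $[\rho] \in Q \backslash \Irr(A)$ yields the desired decomposition.

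There is no real obstacle here beyond bookkeeping: naturality of $\Psi_X$ in $X$ was already established in Theorem \ref{decomposition in twisted equivariant K-theories}, and the identification \eqref{equivalentdefinitions} is itself natural with respect to $Q_{[\rho]}$-equivariant maps between free $Q_{[\rho]}$-spaces (since both the equivariant mapping space description and the classifying map $h_\rho$ are functorial). The only subtle point worth spelling out is that the classifying map $h_\rho:X/Q_{[\rho]} \to BQ_{[\rho]}$ is well defined up to homotopy because $X$ is a free $Q_{[\rho]}$-space, so $B\phi_{\alpha_\rho} \circ h_\rho$ represents a well-defined twist on $X/Q_{[\rho]}$; all other verifications reduce to concatenating the natural isomorphisms already constructed.
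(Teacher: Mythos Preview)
Your proposal is correct and takes essentially the same approach as the paper: the paper's one-line justification is ``Combining \eqref{equivalentdefinitions} with Theorem \ref{general theorem}'', which is exactly the concatenation you describe (the only cosmetic difference is that you cite Theorem \ref{decomposition in twisted equivariant K-theories} rather than Theorem \ref{general theorem}, but the former is just the latter together with Corollary \ref{equivalencetwistedK}). Your explicit remark that freeness of the $Q$-action passes to each subgroup $Q_{[\rho]}$ is the one point the paper leaves implicit, and is worth keeping.
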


\section{The decomposition formula and the Completion Theorem}\label{section4}

Let $A$ be a normal subgroup in $G$ and $Q=G/A$. Suppose that  $X$
a $G$-space on which $A$ acts trivially. Let $E^n_Q=Q * Q * \cdots *Q$ be the Milnor join
of $n$-copies of $Q$ thus making $EQ$ the direct limit of the free $Q$-spaces $E_Q^n$.
Let $B_Q^n=E_Q^n/Q$ and note that it is the union of $n$-contractible open sets, thus
the product of any $n$ elements in the reduced K-theory groups $\widetilde{K}^*(B_Q^n)$ is zero.

Consider the ideal $I(Q) = \text{ker}(\text{res}^G_A: R(G) \to R(A))$ of virtual 
representations whose restriction to $A$ vanish, and consider 
the ideals $I_Q^n = \text{ker}(R(G) \to K_G(E^n_Q))$
of the map that takes a representation $V$ an maps it to $V \times E^n_Q$.
Consider the map $K_G(E_Q^n) \to R(A)^G$ induced by restricting a 
$G$-equivariant vector bundle to a point, and note that the restriction of representations  is the composition of the maps
$$R(G) \to K_G(E_Q^n) \to R(A)^G.$$
This implies that $I_Q^n \subset I(Q)$ 
 and we may define
the completion of $K^*_G(X)$ on the ideals $I_Q^n$ by the
inverse limit:
$$\widehat{K^*_G(X)}_{I_Q} = \lim_{\stackrel{n}{\leftarrow}}K^*_G(X)/ \left(I_Q^n \cdot K^*_G(X)\right).$$

Let $K^*_G(X \times EQ) = \lim_{\stackrel{n}{\leftarrow}} K^*_G(X \times E_Q^n)$ and consider the canonical map
$$K^*_G(X) \to K^*_G(X \times EQ) $$
induced by the pullbacks $K^*_G(X) \to K^*_G(X \times E_Q^n)$
of the projection maps $X \times E_Q^n \to X$.
The generalization of the Completion Theorem of Atiyah and Segal 
\cite[Thm. 2.1]{Atiyah-Segal-completion} to families of subgroups 
\cite[Thm. 2.1]{Jackowski} states that the induced map at the level of the completion
$$\widehat{K^*_G(X)}_{I_Q}  \stackrel{\cong}{\to} K^*_G(X \times EQ)$$
is an isomorphism. Using  Theorem \ref{general theorem}
we may define the completed groups
\begin{align*}
\widehat{K^*_{G_{[\tau_i]}, \tau_i}}(X)_{I_Q} := 
\lim_{\stackrel{n}{\leftarrow}}K^*_{G_{[\tau_i]}, \tau_i}(X)/  \Psi_X^i \left(I_Q^n \cdot K_G(X) \right)
\end{align*}
where $[\tau_i] \in G \backslash \Irr(A)$ and $$\Psi_X^i : K_G^*(X) \to K_{G_{[\tau_i]}, \tau_i}(X), \ \ \ E \mapsto \IV_{\tau_i}\otimes \Hom_{A}(\IV_{\tau_i},E)$$
is the projection map of Theorem \ref{general theorem} that appears in \eqref{projection map}. Therefore we have that the
completion theorem holds also component-wise.

\begin{proposition} Let $X$ be a $G$ compact, Hausdorff space on which $A$ acts trivially.
Let $Q=G/A$, take $\tau_i \in \Irr(A)$,   
consider the projection maps $X \times E^n_Q \to X$ and let 
$$K^*_{G_{[\tau_i]}, \tau_i}(X) \stackrel{\cong}{\to} K^*_{G_{[\tau_i]}, \tau_i}(X \times EQ)$$
be the homomorphism that the projections define. Then the induced map
\begin{align*}
\widehat{K^*_{G_{[\tau_i]}, \tau_i}}(X)_{I_Q} \stackrel{\cong}{\to} K^*_{G_{[\tau_i]}, \tau_i}(X \times EQ)
\end{align*}
is an isomorphism.
\end{proposition}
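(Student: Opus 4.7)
My plan is to deduce the proposition from Jackowski's generalization of the Atiyah--Segal completion theorem, $\widehat{K^{*}_G(X)}_{I_Q} \cong K^{*}_G(X \times EQ)$, combined with the natural decomposition $\Psi$ of Theorem \ref{general theorem}. The key observation is that both sides of Jackowski's isomorphism split as finite direct sums indexed by $G\backslash \Irr(A)$ in a compatible way, so that the completion theorem holds summand by summand.

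First I would decompose the target. Because the $G$-action on each Milnor join $E^n_Q$ factors through $\pi:G\to Q$, the subgroup $A$ acts trivially on each $X\times E^n_Q$ and Theorem \ref{general theorem} applies. Naturality of $\Psi$ with respect to the projections $X\times E^{n+1}_Q\to X\times E^n_Q$ produces an isomorphism of inverse systems $\{K^{*}_G(X\times E^n_Q)\}_n \cong \bigoplus_i \{K^{*}_{G_{[\tau_i]},\tau_i}(X\times E^n_Q)\}_n$. Since the indexing set $G\backslash \Irr(A)$ is finite, the direct sum commutes with $\lim_n$, giving $K^{*}_G(X\times EQ) \cong \bigoplus_i K^{*}_{G_{[\tau_i]},\tau_i}(X\times EQ)$.

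Next I would show that each component pullback descends to the completed group in the statement. For any $V\in I_Q^n$, by definition of $I_Q^n$ the class $V$ vanishes in $K^{*}_G(E^n_Q)$, hence any product $V\cdot E$ with $E\in K^{*}_G(X)$ pulls back to zero in $K^{*}_G(X\times E^n_Q)$. Applying $\Psi_{X\times E^n_Q}^i$ and invoking naturality of the $i$-component $\Psi^i$, we conclude that $\Psi_X^i(V\cdot E)$ maps to zero under the component pullback $\phi_i^n : K^{*}_{G_{[\tau_i]},\tau_i}(X)\to K^{*}_{G_{[\tau_i]},\tau_i}(X\times E^n_Q)$. Therefore $\phi_i^n$ descends to the quotient $K^{*}_{G_{[\tau_i]},\tau_i}(X)/\Psi_X^i(I_Q^n\cdot K^{*}_G(X))$ at each finite stage, and in the inverse limit defines a canonical homomorphism $\alpha_i : \widehat{K^{*}_{G_{[\tau_i]},\tau_i}}(X)_{I_Q} \to K^{*}_{G_{[\tau_i]},\tau_i}(X\times EQ)$.

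Finally I would assemble the maps. At each finite level the containment $\Psi_X(I_Q^n K^{*}_G(X))\subseteq \bigoplus_i\Psi_X^i(I_Q^n K^{*}_G(X))$ in $\bigoplus_i K^{*}_{G_{[\tau_i]},\tau_i}(X)$ provides a canonical surjection
\[
K^{*}_G(X)/(I_Q^n\cdot K^{*}_G(X)) \twoheadrightarrow \bigoplus_i K^{*}_{G_{[\tau_i]},\tau_i}(X)/\Psi_X^i(I_Q^n\cdot K^{*}_G(X));
\]
taking $\lim_n$ and combining with the first step produces a commutative triangle
\[
\widehat{K^{*}_G(X)}_{I_Q} \twoheadrightarrow \bigoplus_i \widehat{K^{*}_{G_{[\tau_i]},\tau_i}}(X)_{I_Q} \stackrel{\bigoplus \alpha_i}{\longrightarrow} \bigoplus_i K^{*}_{G_{[\tau_i]},\tau_i}(X\times EQ)
\]
whose outer composition coincides with the Jackowski isomorphism assembled componentwise. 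Consequently the slant surjection is forced to be an isomorphism and so is $\bigoplus \alpha_i$; extracting the $[\tau_i]$-th direct summand then yields the required isomorphism $\alpha_i$.

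The main obstacle is to ensure that the slant arrow remains a surjection after passing to the inverse limit: equivalently, one must check that the inverse system of kernels $\bigl(\bigoplus_i\Psi_X^i(I_Q^n K^{*}_G(X))\bigr)/\Psi_X(I_Q^n K^{*}_G(X))$ satisfies the Mittag--Leffler condition so that the corresponding $\lim^1$ vanishes. I expect this to follow from the fact that $K^{*}_G(X)$ is a finitely generated $R(G)$-module (since $X$ is compact) together with an Artin--Rees type argument for the $I_Q$-adic filtration, using Noetherianity of the representation ring $R(G)$.
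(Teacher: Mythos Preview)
Your proposal is correct and follows the same strategy as the paper: deduce the component-wise statement from Jackowski's completion theorem together with the natural decomposition of Theorem~\ref{general theorem}. The paper's own proof is a single sentence (``follows directly from Theorem~\ref{general theorem} and the Completion Theorem''), so what you have written is essentially a careful unpacking of that sentence rather than a different argument. Your commutative triangle and the deduction that both legs are isomorphisms once the composite is known to be Jackowski's isomorphism is exactly the mechanism the one-line proof is tacitly invoking.

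The one point where you go beyond the paper is your final paragraph, and it is worth a comment. You are right that, as stated, one must know the level-wise surjection
\[
K^{*}_G(X)/(I_Q^n K^{*}_G(X)) \twoheadrightarrow \bigoplus_i K^{*}_{G_{[\tau_i]},\tau_i}(X)/\Psi_X^i(I_Q^n K^{*}_G(X))
\]
survives the inverse limit. Your proposed fix (finite generation of $K^{*}_G(X)$ over the Noetherian ring $R(G)$, hence Mittag--Leffler for the tower of kernels) is correct and is in fact the standard ingredient in Atiyah--Segal--Jackowski style arguments; the paper simply does not mention it. An alternative way to bypass the issue is to use that Jackowski's theorem is really an isomorphism of pro-groups $\{K^{*}_G(X)/I_Q^n K^{*}_G(X)\}\cong\{K^{*}_G(X\times E_Q^n)\}$: applying the natural isomorphism $\Psi$ level-wise to the right-hand system and then taking limits gives the component statement directly, without ever needing to compare $\Psi_X(I_Q^n K^{*}_G(X))$ with $\bigoplus_i\Psi_X^i(I_Q^n K^{*}_G(X))$.
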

\begin{proof}
The result follows directly from Theorem \ref{general theorem} and the Completion Theorem
\cite[Thm. 2.1]{Jackowski}.
\end{proof}

Applying Corollary \ref{equivalencetwistedK} and the isomorphism obtained in 
(\ref{equivalentdefinitions})  to the expression on the right,
we obtain the canonical isomorphism
\begin{align*}
\widehat{K^*_{G_{[\tau_i]}, \tau_i}}(X)_{I_Q} \stackrel{\cong}{\to} K^*(X \times_{Q_{[\tau_i]}} EQ; B\phi_{\tau_i} \circ h_{\tau_i} )
\end{align*}
between the completed group of $K^*_{G_{[\tau_i]}, \tau_i}(X)$ and the
$B\phi_{\tau_i} \circ h_{\tau_i}$-twisted K-theory of the space $X\times_{Q_{[\tau_i]}} EQ$.

\section{Atiyah-Hirzebruch-Segal Spectral Sequence}\label{section5}

In this section we use Theorem \ref{decompositionK} to describe 
the third differential of the generalized Atiyah-Hirzebruch spectral sequence developed by Segal in
\cite[\S 5]{Segal-Classifying}
whenever we consider spaces with constant isotropy.

Assume that  $G$ is a finite group acting on a compact $G$-CW complex $X$ in 
such a way that $G_{x}=A$ for every $x\in A$. Here $A$ is a normal subgroup of $G$
and $Q=G/A$.  
Let $R(-)$ denote the coefficient system defined  by the assignment $G/H\mapsto R(H)$ with $R(H)$ the Grothendieck
ring of complex representations of the group $H$.
Since the action of $G$ on $X$ has constant isotropy then 
the Bredon cohomology groups $H_G^p(X,R(-))$ 
can be identified with the cohomology of the cochain complex 
$\Hom_{\IZ[G]}(C_{*}(X),R(A))$. The group $A$ acts trivially on both
$C_{*}(X)$ and $R(A)$ so that this cochain complex is isomorphic to 
$\Hom_{\IZ[Q]}(C_{*}(X),R(A))$.  As a $Q$-representation we have a 
decomposition 
$R(A)\cong  \bigoplus_{[\rho]\in Q \backslash\Irr(A)} \IZ[Q/Q_{[\rho]}]$.  
Using this decomposition we can identify $H_G^p(X,R(-))$ with 
$\bigoplus_{[\rho]\in Q \backslash\Irr(A)}H^{p}(X/Q_{[\rho]};\IZ)$.

On the other hand, let $\calu=\{U_{i}\}_{i\in \cali}$  
be an open cover of  $X$ by $G$-invariant open sets. Assume that  $\cali$ is a well 
ordered set. We say that $\calu$  s a contractible slice cover if 
for every sequence $i_{1}\le \cdots\le i_{p}$ of elements in $\cali$ with 
$U_{i_{1},\dots, i_{p}}$ nonempty 
we can find some element $x_{i_{1},\dots,i_{p}}\in U_{i_{1},\dots, i_{p}}$ 
such that the inclusion map  
$Gx_{i_{1},\dots,i_{p}}\hookrightarrow U_{i_{1},\dots,i_{p}}$ is a 
$G$-homotopy equivalence. Here we are using the notation 
$ U_{i_{1},\dots,i_{p}}=U_{i_{1}}\cap\cdots\cap U_{i_{p}}$. It can be seen that 
such a contractible slice cover exists for  any compact  $G$-CW complex.

Let us consider the filtration of $K^*_G(X)$  that the open cover induces and the spectral
sequence associated to this filtration; the definition and the properties of this spectral sequence 
were carried out by
Segal in \cite[\S 5]{Segal-Classifying}.  The second page of the spectral sequence is 
the Bredon cohomology with coefficients in representations $H^*_G(X, R(-))$ and in our case
this groups may be identified with 
$$E_2^{p,even} \cong \bigoplus_{[\rho]\in Q \backslash \Irr(A)}H^{p}(X/Q_{[\rho]};\IZ) \ \ \text{and} \ \ E_2^{p,odd}=0.$$
By Theorem \ref{decompositionK} we know that the $G$-equivariant K-theory of $X$ decomposes in a direct sum
of twisted K-theories
\[
\Psi_{X}:K_{G}^{*}(X)\to 
\bigoplus_{[\rho]\in Q \backslash \Irr(A)} K^{*}(X/Q_{[\rho]}; B\phi_{\alpha_\rho} \circ h_\rho),
\]
where $[\alpha_\rho] \in H^2(Q_{[\rho]}, \IS^{1})$ is the obstruction on lifting
the representation $\rho$ to $G_{[\rho]}$, $\phi_{\alpha_\rho}: Q_{[\rho]} \to PU(\calh)$ is the stable homomorphism
defined by $\alpha_\rho$ and  $h_\rho:X/Q_{[\rho]} \to BQ_{[\rho]}$ is the classifying
map of the $Q_{[\rho]}$-principal bundle $X \to X/Q_{[\rho]}$.

The spectral sequence associated to $K^{*}_G(X)$ decomposes as a direct sum of the spectral sequences
associated to the twisted K-theories $K^{*}(X/Q_{[\rho]}; B\phi_{\alpha_\rho} \circ h_\rho)$. Therefore if we denote
by $[\alpha^\IZ_\rho] \in H^3(BQ_{[\rho]},\IZ)$ the element that $[\alpha_\rho] $ defines via the isomorphism
$$H^2(BQ_{[\rho]},\IS^{1}) \stackrel{\cong}{\to} H^3(BQ_{[\rho]},\IZ),$$
then the cohomology class $h_\rho^*[\alpha^\IZ_\rho] \in H^3(X/Q_{[\rho]},\IZ)$  is the one that the map
$B\phi_{\alpha_\rho} \circ h_\rho :X/Q_{[\rho]} \to BPU(\calh) \simeq K(\IZ,3)$ defines. Therefore by  
\cite[Proposition 4.6]{Atiyah-Segal2} the third differential in the Atiyah-Hirzebruch spectral sequence
associated twisted K-groups $K^{*}(X/Q_{[\rho]}; B\phi_{\alpha_\rho} \circ h_\rho)$ is the operator
\begin{align*}
d^\rho_3: H^{*}(X/Q_{[\rho]};\IZ) &\to H^{*+3}(X/Q_{[\rho]};\IZ)\\ 
\eta&\mapsto d^\rho_3(\eta)= Sq^3_\IZ \eta   - h_\rho^*[\alpha^\IZ_\rho] \cup \eta.
\end{align*}
Here $Sq^3_\IZ$ is the composition of the maps $\beta \circ Sq^2 \circ \text{mod}_2$, where $\text{mod}_2$ is the reduction
modulo 2, $Sq^2$ is the Steenrod operation, and $\beta$ is the Bockstein map. We obtain the following theorem.

\begin{theorem}
Suppose that $A\subset G$ is a normal subgroup and let $Q=G/A$.
Let $X$ be a compact $G$-CW complex such that $G_{x}=A$ for all $x\in X$.  
With the identifications made above, the third differential of 
the Atiyah-Hirzebruch spectral sequence 
\[
d_{3}:\bigoplus_{[\rho]\in Q \backslash \Irr(A)}H^{p}(X/Q_{[\rho]};\IZ)\to 
\bigoplus_{[\rho]\in Q \backslash \Irr(A)}H^{p}(X/Q_{[\rho]};\IZ)
\]
is defined coordinate-wise in such a way that for $\eta\in H^{p}(X/Q_{[\rho]};\IZ)$ we have  
\[
d_3(\eta) :=d_3^\rho(\eta)= Sq^3_\IZ \eta  - h_\rho^*[\alpha^\IZ_\rho] \cup \eta. 
\]
\end{theorem}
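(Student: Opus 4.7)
The plan is to deduce the formula for $d_3$ by decomposing the Segal spectral sequence for $K_G^*(X)$ as a direct sum of Atiyah-Hirzebruch spectral sequences, one for each twisted K-theory summand appearing in Theorem \ref{decompositionK}, and then invoking Atiyah-Segal's formula for $d_3$ in each twisted K-theory separately.

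First, I would establish that the decomposition $\Psi_X$ of Theorem \ref{decompositionK} is natural in $X$ in a sufficiently strong sense: for any $G$-invariant open subset $U\subset X$ the restriction map $K_G^*(X)\to K_G^*(U)$ commutes with $\Psi$. This follows directly from the functoriality clause of Theorem \ref{decomposition in twisted equivariant K-theories}. Applied to the contractible slice cover $\{U_i\}_{i\in\cali}$ and all finite intersections $U_{i_1,\dots,i_p}$, this shows that the $E_1$-page of Segal's spectral sequence splits as a direct sum indexed by $[\rho]\in Q\backslash\Irr(A)$, and the differentials respect the splitting.

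Next, I would identify the summand indexed by $[\rho]$ with the Atiyah-Hirzebruch spectral sequence associated to the induced open cover $\{U_i/Q_{[\rho]}\}$ of $X/Q_{[\rho]}$, computing the twisted K-theory $K^*(X/Q_{[\rho]};B\phi_{\alpha_\rho}\circ h_\rho)$. Since $Q$ acts freely on $X$, each quotient $U_i/Q_{[\rho]}$ is open in $X/Q_{[\rho]}$ and inherits the contractibility of the slices, so the descended cover is a good cover computing the untwisted $E_2$-page as $H^*(X/Q_{[\rho]};\IZ)$. This matches, term by term, the Bredon cohomology identification $H^*_G(X,R(-))\cong\bigoplus_{[\rho]}H^*(X/Q_{[\rho]};\IZ)$ recalled above.

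Finally, the classical formula of Atiyah and Segal \cite[Proposition 4.6]{Atiyah-Segal2} for the third differential of the Atiyah-Hirzebruch spectral sequence of a twisted K-theory $K^*(Y;\xi)$ gives
\[
d_3(\eta)=Sq^3_\IZ\eta - [\xi]\cup\eta,
\]
where $[\xi]\in H^3(Y,\IZ)$ is the Dixmier-Douady class of the twist. Applied to each summand with $Y=X/Q_{[\rho]}$ and $[\xi]=h_\rho^*[\alpha^\IZ_\rho]$, this produces the desired formula coordinate-wise. The main obstacle is the first step: verifying carefully that Segal's spectral sequence for $K^*_G(X)$ splits, on the nose and compatibly with differentials, as a direct sum of Atiyah-Hirzebruch spectral sequences for the twisted K-theories of the quotients $X/Q_{[\rho]}$. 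Once this compatibility is established, Atiyah-Segal's formula finishes the proof.
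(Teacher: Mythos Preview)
Your proposal is correct and follows essentially the same approach as the paper: decompose Segal's spectral sequence via the naturality of $\Psi_X$ (Theorem \ref{decompositionK}) into a direct sum indexed by $Q\backslash\Irr(A)$, identify each summand with the Atiyah--Hirzebruch spectral sequence for the twisted K-theory $K^*(X/Q_{[\rho]};B\phi_{\alpha_\rho}\circ h_\rho)$, and then invoke \cite[Proposition 4.6]{Atiyah-Segal2} coordinate-wise. If anything, you are slightly more explicit than the paper about \emph{why} the spectral sequence splits (naturality on the contractible slice cover), which the paper simply asserts.
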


\section{Examples}\label{section6}

Let us take the dihedral group $G=D_8$ generated by the elements $a,b$ with relations $a^4=b^2=1$ and $bab=a^3$.
Let $\Irr(D_8)=\{\mathbf{1}, \lambda, \sigma, \sigma \otimes \lambda, \nu\}$ be the set of isomorphism classes of irreducible
representations of $D_8$ defined by the following homomorphisms: 
\begin{align*}
\lambda: D_8 \to \IC^*, & \ \ \lambda(a)= -1, \ \ \lambda(b)=1,\\
\sigma: D_8 \to \IC^*, & \ \ \sigma(a)= 1, \ \ \sigma(b)=-1, \\ 
\nu: D_8 \to U(2), & \ \ \nu(a)= \left( \begin{matrix} i & 0 \\ 0 & -i \end{matrix} \right), \ \ \nu(b)= \left( \begin{matrix} 0& 1 \\ 1 & 0 \end{matrix} \right).\end{align*}
 
 Therefore we have that $R(D_8)= \IZ\langle  \mathbf{1}, \lambda, \sigma, \sigma \otimes \lambda, \nu \rangle$, where the ring
 structure is given by the relations $\sigma \otimes \sigma= \mathbf{1} = \lambda \otimes \lambda$ and $\lambda \otimes \nu=\nu
 =\sigma \otimes \nu$.

\subsection{$G=D_8$ and $A=\IZ/4$}Let us apply Theorem \ref{general theorem} to $K^*_{D_8}(*)=R(D_8)$
whenever $A=\IZ/4=\langle a \rangle$. In this case $Q=\IZ/2$ and the set orbits
of the $D_8$ action on $\Irr(\IZ/4)= \{\mathbf{1}, \rho , \rho^2, \rho^3\}$
is $D_8  \backslash\Irr(\IZ/4)=  \{\{\mathbf{1}\},\{ \rho , \rho^3\},\{ \rho^2\}\}$ with
${G}_{[\rho]}=\IZ/4$; therefore we have the isomorphism
$$K^*_{D_8}(*) \cong K^*_{D_8, \mathbf{1}}(*) \oplus K^*_{\IZ/4, \rho}(*) \oplus
K^*_{D_8, \rho^2}(*).$$
Now, $K^*_{D_8, \mathbf{1}}(*)$ consists of the representations of $D_8$ that
trivialize once restricted to $\langle a \rangle= \IZ/4$ and therefore 
$$K^*_{D_8, \mathbf{1}}(*) = \IZ\langle \mathbf{1}, \sigma \rangle,$$
the group $K^*_{D_8, \rho^2}(*)$ consists of the representations of $D_8$ that
restrict to $\rho^2$, hence
$$K^*_{D_8, \rho^2}(*) = \IZ\langle \lambda, \sigma \otimes \lambda \rangle,$$
and the group $K^*_{\IZ/4, \rho}(*)$ is by definition $\IZ \langle \rho \rangle$. Recall from the
proof of Theorem \ref{general theorem} that we can construct a $D_8$ representation
out of an element $\rho \in K^*_{\IZ/4, \rho}(*)$. The construction is as follows. Take the representation
$\rho$ which acts in the 1-dimensional vector space $V= \IC$ by the homomorphism
$\rho: \IZ/4 \to \IC^*, \ \ 1 \mapsto i$, take the elements $\{1,b\}$ as the
representatives of the orbits $G/G_{[\rho]}=\IZ/2$ and define the vector space
$V \oplus V$ where the action of $D_8$ on $V \oplus V$ is generated by the equations
$$a \circ (v_b, v_1):=(\rho(a^3)v_b \oplus \rho(a)v_1) = (-i v_b, iv_1)$$
$$b \circ (v_b,v_1) := (v_{bb},v_b)=(v_1,v_b).$$ 
The $D_8$ action on $V \oplus V$ gives precisely the irreducible representation $\nu$
and therefore the projection map $K^*_{D_8}(*) \to K^*_{\IZ/4, \rho}(*)$ projects
$\nu \mapsto \rho$.

Now let us calculate $K_{D_8}^*(E\IZ/2)$ using the decomposition theorem:
$$K^*_{D_8}(E\IZ/2) = K^*_{D_8, \mathbf{1}}(E\IZ/2) \oplus K^*_{\IZ/4, \rho}(E\IZ/2) \oplus
K^*_{D_8, \rho^2}(E\IZ/2).$$

The first component  $K^*_{D_8, \mathbf{1}}(E\IZ/2)$ becomes $K^*(B\IZ/2)$
since the subgroup $A= \IZ/4$ must act trivially on the fibers. The second component
$K^*_{\IZ/4, \rho}(E\IZ/2)$ becomes $\IZ\langle \rho \rangle$ since 
$\IZ/4$ acts trivially on $E\IZ/2$ and $E\IZ/2$ is contractible. And the third component
$K^*_{D_8, \rho^2}(E\IZ/2)$ becomes $K^*_{\IZ/2 \times \IZ/2, \beta}(E\IZ/2)$
with $\beta$ the non trivial irreducible representation of the left $\IZ/2$ since 
the representation $\rho^2$ is trivial on $a^2$; moreover the group 
$K^*_{\IZ/2 \times \IZ/2, \beta}(E\IZ/2)$ is isomorphic to 
$K^*(B\IZ/2)$ since $K^*_{\IZ/2, \beta}(*) = \IZ\langle \beta \rangle$ and therefore
we may apply the Kunneth isomorphism theorem for torsion free groups.
Therefore we have the isomorphism
$$K^*_{D_8}(E\IZ/2) = K^*(B\IZ/2) \oplus \IZ\langle \rho \rangle \oplus K^*(B\IZ/2).$$

\subsection{$G=D_8$ and $A=\IZ/2$} In the case that $A=Z(D_8)=\langle a^2 \rangle =\IZ/2$ we have
that $D_8 \backslash \Irr(\IZ/2)=\{\{\mathbf{1}\},\{\rho\}\}$ where $\rho$ is the non trivial
irreducible representation of $\IZ/2$, and $Q=D_8/Z(D_8)=(\IZ/2)^2$. In this case Theorem \ref{general theorem}
becomes 
$$K^*_{D_8}(*)= K^*_{D_8,\mathbf{1}}(*) \oplus K^*_{D_8, \rho}(*)$$
with $K^*_{D_8,\mathbf{1}}(*) =\IZ \langle \mathbf{1}, \sigma, \lambda, \sigma \otimes \lambda \rangle \cong K^*_{(\IZ/2)^2}(*)$ and $K_{D_8, \rho}(*) = \langle \nu \rangle$.

Let us now see what happens if we apply the completion theorem component by component. 
The kernel of the restriction map $R(D_8) \to R(\IZ/2)$ is the ideal $I(Q)$ 
generated by the elements 
$\{ \mathbf{1}-\lambda, \mathbf{1} - \sigma , \mathbf{1} - \sigma \otimes \lambda \}$, 
and since the action of this ideal on the representation $\nu$ is trivial, 
we conclude that 
$\Psi_*^\rho (I_Q^n \cdot K^*_{D_8}(*))= 0$ and therefore
$K^*_{D_8,\rho}(*) =\widehat{K^*_{D_8, \rho}}(*)_{I_Q}.$
Hence we obtain the following isomorphisms
\begin{align*}
K^*_{D_8}(E(\IZ/2)^2) & \cong \widehat{K^*_{D_8}(*)}_{I_Q} \cong  
\widehat{K^*_{D_8, \rho}}(*)_{I_Q} \oplus  \widehat{K^*_{D_8, \mathbf{1}}}(*)_{I_Q} \\
& \cong K^*_{D_8, \rho}(*) \oplus \widehat{K^*_{Q}(*)}_{I_Q} \cong \langle \nu \rangle \oplus K^*(B(\IZ/2)^2).
\end{align*}
Therefore $K^*_{D_8}(E(\IZ/2)^2)$ is the direct sum
of the K-theory of $B(\IZ/2)^2$ and the free group generated 
by the $D_8$-equivariant vector bundle $\nu \times ED_8 \to ED_8$.

The Atiyah-Hirzebruch-Segal spectral sequence associated to $K^*_{D_8}(E(\IZ/2)^2)$ has a
 second page isomorphic to Bredon cohomology $H^*_{D_8}(E(\IZ/2)^2;R(-))$ which
 is isomorphic to the ring
$H^*(B(\IZ/2)^2, \IZ) \otimes_\IZ R(\IZ/2) $
since the isotropy is constant and $D_8$ acts trivially on $R(\IZ/2)$. This ring  is 
isomorphic to the ring
$$\IZ[x^2,y^2, x^2y+xy^2,\alpha] /(\alpha^2-1,2x^2,2y^2)$$
where $|\alpha|=0$ and $|x|=|y|=1$;
 here we have used the fact that  $H^*(B(\IZ/2)^2, \IZ)$ is the kernel
of the Steenrod square operation $Sq^1$ in the ring $H^*(B(\IZ/2)^2, \IF_2)= \IF_2[x,y]$.

The differential $d_3$ is defined on generators as $d_3(\alpha)=(x^2y+xy^2)\alpha$ and $d_3p(x,y)=Sq_\IZ^3(p(x,y))$ on 
any polynomial on $x$ and $y$. Since the differential preserves the $R(\IZ/2)$ structure we have that the fourth page 
is a direct sum
$$H^*\left(H^*(B(\IZ/2)^2, \IZ),Sq_\IZ^3\right) \oplus H^*\left(H^*(B(\IZ/2)^2, \IZ), Sq_\IZ^3 +(x^2y+xy^2)\cup\right).$$
The left hand side was calculated by Atiyah \cite[pp. 285]{Atiyah-characters} and the cohomology becomes
$$\IZ[x^2,y^2]/(x^4y^2-x^2y^4, 2x^2,2y^2),$$
 and since everything is of even degree, the spectral sequence collapses at the fourth page. 
The cohomology of the right hand side is localized in degree $0$ and is isomorphic to $\IZ\cong \IZ \langle 2\alpha \rangle$ (one can check
that the differential $Sq^3 + (x^2y+xy^2) \cup$ on $\IF_2[x^2,y^2, x^2y+xy^2]$ has trivial cohomology).

Therefore the page at infinity becomes
$$E_\infty= \IZ[x^2,y^2]/(x^4y^2-x^2y^4, 2x^2,2y^2) \oplus \IZ\langle 2\alpha \rangle$$
where the ring $\IZ[x^2,y^2]/(x^4y^2-x^2y^4, 2x^2,2y^2) $ corresponds to the associated
graded of $K^*(B(\IZ/2)^2)$ and $\IZ\langle 2\alpha \rangle$ corresponds to $K^*_{D_8, \rho}(*)= \IZ\langle \nu \rangle$ with $\nu \mapsto 2\alpha$.

In particular note that the image of edge homomorphism of the spectral sequence
$$K^0_{D^8}(E(\IZ/2)^2) \to H^0(B(\IZ/2)^2, \IZ) \otimes_\IZ R(\IZ/2) \cong R(\IZ/2)$$
is not sujective. The image is $\IZ \langle 1, 2\alpha \rangle \subset \IZ \langle 1, \alpha \rangle= R(\IZ/2)$.

\bibliographystyle{abbrv} 
  \bibliography{Spectral-sequence-equivariant-K-theory}

\end{document}